\documentclass[11pt,reqno]{amsart}
\usepackage[T1]{fontenc}
\usepackage[utf8]{inputenc} 
\usepackage{lmodern}
\usepackage[english]{babel}
\usepackage{microtype}

\usepackage[%
    a4paper,
	left=2.5cm,       
	right=2.5cm,      
	top=3.5cm,        
	bottom=3.5cm,     
	heightrounded,    
	bindingoffset=0mm 
]{geometry}

\usepackage{graphicx} 
\usepackage{float} 

 \usepackage[parfill]{parskip} 
 
\usepackage{booktabs} 
\usepackage{array} 
\usepackage{paralist} 
\usepackage{verbatim} 
\usepackage{subfig} 
\usepackage{mathrsfs}
\usepackage{amssymb}
\usepackage{xcolor}
\usepackage{amsthm}
\usepackage{amsmath,amsfonts,amssymb,esint,hyperref}
\usepackage[noabbrev, capitalize]{cleveref}

\usepackage{autonum}

\usepackage{graphics,color}
\usepackage{enumerate, enumitem}
\usepackage{mathtools,centernot}
\usepackage{cases}
\usepackage{amsrefs}
\usepackage{bbm}
\usepackage{xfrac}

\usepackage{bookmark}

\newtheorem{theorem}{Theorem}[section]

\newtheorem{corollary}[theorem]{Corollary}
\newtheorem{proposition}[theorem]{Proposition}
\newtheorem{remark}[theorem]{Remark}
\newtheorem{conjecture}[theorem]{Conjecture}

\numberwithin{equation}{section}

\newcommand{\abs}[1]{\left|#1\right|}

\newcommand{\T}{\ensuremath{\mathbb{T}}}
\newcommand*{\R}{\ensuremath{\mathbb{R}}}

\newcommand*{\N}{\ensuremath{\mathbb{N}}}

\newcommand{\eps}{\varepsilon}

\renewcommand{\MR}[1]{} 

\usepackage{color, graphicx}
\usepackage{mathrsfs, dsfont}

\usepackage[]{hyperref}
\hypersetup{
    colorlinks=true,       
    linkcolor=red,          
    citecolor=blue,        
    filecolor=red,      
    urlcolor=cyan           
}

\def\dist{\mathop{\rm dist}\nolimits}    
\def\div{\mathop{\rm div}\nolimits}    
\def\curl{\mathop{\rm curl}\nolimits}    
 
\def\spt{\mathop{\rm Spt}\nolimits}

\newcommand{\be}{\begin{equation}}
\newcommand{\ee}{\end{equation}}

\title{Quantitative enstrophy bounds for measure vorticities}

\author[L. De Rosa]{Luigi De Rosa}
\address[L. De Rosa]{Gran Sasso Science Institute, viale Francesco Crispi, 7, 67100 L’Aquila, Italy}
\email{luigi.derosa@gssi.it}

\author[M. Marcotullio]{Margherita Marcotullio}
\address[M. Marcotullio ]{Gran Sasso Science Institute, viale Francesco Crispi, 7, 67100 L’Aquila, Italy}
\email{margherita.marcotullio@gssi.it}

\date{\today}

\subjclass[2020]{76D05 - 76F02 - 28C05}
\keywords{Navier--Stokes equations - Measure vorticity - Enstrophy - Nash inequality}

\begin{document}

\begin{abstract}
We consider the two-dimensional incompressible Navier--Stokes equations with measure initial vorticity. By means of improved Nash inequalities, we establish quantitative estimates for the enstrophy  depending on the absolute vorticity decay on balls.  The bounds are optimal in several aspects and yield to a conjecturally sharp rate of the dissipation in the Delort's class.
\end{abstract}

\maketitle

\section{Introduction}
On $\T^2\times [0,\infty)$, we consider the two-dimensional incompressible Navier--Stokes equations which, when written for the vorticity $\omega^\nu:=\curl u^\nu$, read as
\begin{equation}\label{NS-Vort} \tag{NS-Vort}
\left\{\begin{array}{ll}
\partial_t \omega^\nu +u^\nu \cdot \nabla \omega^\nu =\nu \Delta \omega^\nu \\
\curl u^\nu=\omega^\nu \\
\omega^\nu(\cdot, 0)=\omega^\nu_0,
\end{array}\right.
\end{equation}
where $\omega^\nu_0:=\curl u^\nu_0$ for a given divergence-free initial velocity $u_0^\nu$. In \eqref{NS-Vort} it is tacitly assumed that $u^\nu$ is the incompressible vector field such that $\curl u^\nu =\omega^\nu$, unique among the ones with a fixed spatial average.  By compatibility, the spatial average of $u^\nu$ is set to be the same of $u^\nu_0$ for all positive times.

For any $\nu>0$ and any divergence-free initial condition $u^\nu_0 \in L^2(\T^2)$,  global-in-time weak solutions 
$$
u^\nu \in L^\infty ([0,\infty); L^2(\T^2))\cap L^2([0,\infty); \dot{H}^1(\T^2))
$$ are known to exist since the seminal work of Leray \cite{L34}, and also Hopf \cite{Hopf51}. In two space dimensions they are unique \cites{RR,BV22}, they instantaneously become smooth, and they satisfy the energy identity
\begin{equation}
    \label{NS en bal}
    \frac{1}{2}\| u^\nu (t)\|_{L^2}^2 + \nu \int_0^t \|\omega^\nu (\tau)\|^2_{L^2}\,d\tau= \frac{1}{2}\| u^\nu_0\|_{L^2}^2\qquad \forall t\in [0,\infty).
\end{equation}
At least on the whole space $\R^2$, the two-dimensional Navier--Stokes equations are well-posed for any $\omega^\nu_0\in \mathcal M(\R^2)$ as well \cite{GG05}. This is more delicate with respect to the more classical setting of Leray since it also allows initial velocities with infinite kinetic energy\footnote{A finite energy initial velocity does not allow for Dirac masses in the vorticity \cite{delort1991existence}.}.

\subsection{Main results}
We are mainly interested in establishing estimates
for the enstrophy $\|\omega^\nu(t)\|^2_{L^2}$ when the initial vorticities are finite Borel measures. When $\{\omega^\nu_0\}_\nu\subset \mathcal M (\T^2)$ is bounded, the estimate 
\begin{equation}\label{enstroph trivial bound}
\|\omega^\nu(t)\|^2_{L^2}\lesssim \frac{1}{\nu t}\qquad \forall \nu, t>0
\end{equation}
is well-known. The same holds under the orthogonal assumption that $\{u^\nu_0\}_\nu\subset L^2 (\T^2)$ is bounded. We refer, for instance, to \cite{DRP24} for more details.

Our goal is to improve on \eqref{enstroph trivial bound}, possibly in a sharp way,  under the additional assumption that the absolute vorticity on balls, i.e.
\begin{equation}
        \label{vort on balls}
     \mathbb{M}_{\omega} (r):=\sup_{\substack{x \in \T^2  \\ \nu, t>0}} \int_{B_r(x)}|\omega^\nu(y,t)|\,dy \qquad \forall r>0,
    \end{equation}
    vanishes in the limit as $r\rightarrow 0$, uniformly in time and viscosity. Building on the strategy introduced in \cites{CLLS16,ELL25,LMP21}, in \cref{S:enstroph bounds} we present how  enstrophy bounds can be obtained from the knowledge of $\mathbb{M}_\omega (r)$. Although the precise expression might be implicit in general, here we state the main implication of the approach in a couple of specific cases.

\begin{theorem}\label{T:main}
    Let $\{u^\nu_0\}_\nu\subset L^2(\T^2)$ be such that $\{\omega^\nu_0\}_\nu\subset \mathcal M (\T^2)$ is bounded. Let $\{\omega^\nu\}_\nu$ be the corresponding solutions to \eqref{NS-Vort} and let $\mathbb{M}_\omega$ be defined as in \eqref{vort on balls}. 
    \begin{itemize}
        \item[$(a)$] If $\mathbb{M}_\omega(r)\lesssim r^\alpha$ for some $\alpha\in (0,2)$, then 
        $$
        \|\omega^\nu (t)\|^2_{L^2}\lesssim \frac{1}{(\nu t)^{\frac{2-\alpha}{2}}}  \qquad  \text{and}\qquad   \nu \int_0^T \|\omega^\nu (\tau)\|^2_{L^2}\,d\tau\lesssim (\nu T)^{\frac{\alpha}{2}}\qquad  \forall \nu>0,
$$
for all times $t,T>0$ such that $\nu t<1$ and $\nu T<1$, with the implicit constants independent of $\nu,t,T$.
     \item[$(b)$] If $\mathbb{M}_\omega(r)\lesssim |\log r|^{-\sfrac12}$ then 
     \begin{equation}\label{time bound for delort}
      \|\omega^\nu (t)\|^2_{L^2}\lesssim \frac{1}{\nu t \sqrt{|\log (\nu t)|}}  \qquad   \forall \nu>0,
          \end{equation}
     and for all times $t>0$ such that $\nu t<1$, with the implicit constant that does not depend on $\nu, t$.
     In particular, for any $\delta>0$, there holds
     $$
     \nu \int_\delta^T \|\omega^\nu (\tau)\|^2_{L^2}\,d\tau\lesssim \frac{\log \frac{T}{\delta}}{\sqrt{|\log (\nu T) |}} \qquad \forall \nu>0,
     $$
     and for any $T>\delta$ such that $\nu T<1$, with the implicit constant independent of $\nu,\delta,T$.
    \end{itemize}
\end{theorem}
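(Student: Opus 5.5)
The approach is to combine the enstrophy decay ODE with an improved Nash-type inequality that uses $\mathbb{M}_\omega$. Since $\omega^\nu$ is smooth for $t>0$, the vorticity equation gives the enstrophy identity
\begin{equation}
\frac{d}{dt}\|\omega^\nu(t)\|_{L^2}^2 = -2\nu \|\nabla\omega^\nu(t)\|_{L^2}^2,
\end{equation}
because the transport term integrates to zero by incompressibility. Hence $t\mapsto\|\omega^\nu(t)\|_{L^2}^2$ is nonincreasing.

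\textbf{Step 1: an improved Nash inequality.} We aim for $\|\omega\|_{L^2}^2 \lesssim \mathbb{M}_\omega(r)\,\|\omega\|_{L^1}\, r^{-2} + r^2\|\nabla\omega\|_{L^2}^2$ for all small $r$. To get it, write $\omega = \omega_r + (\omega-\omega_r)$, where $\omega_r=\omega*\varphi_r$ is a mollification at scale $r$.
\begin{itemize}
\item The low-frequency part satisfies $\|\omega_r\|_{L^2}^2 \le \|\omega_r\|_{L^\infty}\|\omega\|_{L^1}$, and $\|\omega_r\|_{L^\infty}\lesssim r^{-2}\sup_x\int_{B_r(x)}|\omega| \le r^{-2}\mathbb{M}_\omega(r)$.
\item The high-frequency part satisfies $\|\omega-\omega_r\|_{L^2}\lesssim r\|\nabla\omega\|_{L^2}$.
\end{itemize}
Since $\|\omega^\nu(t)\|_{L^1}\le\|\omega_0^\nu\|_{\mathcal M}$ is uniformly bounded (maximum principle for the advection--diffusion equation), this yields $X \lesssim \mathbb{M}_\omega(r) r^{-2} + r^2 Y$, with $X=\|\omega\|_{L^2}^2$ and $Y=\|\nabla\omega\|_{L^2}^2$.

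\textbf{Step 2: the differential inequality.} Choose $r$ so that $\mathbb{M}_\omega(r)r^{-2}\approx X/2$; since $\mathbb{M}_\omega(r)r^{-2}\to\infty$ as $r\to0$, such an $r$ exists when $X$ is large. Then $Y\gtrsim X/r(X)^2$, so $X'\lesssim -\nu X/r(X)^2$.

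\textbf{Step 3: integrating.} Define $F(X)=\int_X^\infty \frac{r(s)^2}{s}\,ds$. The inequality gives $F(X(t))\gtrsim \nu t$, hence $X(t)\lesssim F^{-1}(c\nu t)$.
\begin{itemize}
\item In case $(a)$, $r^{\alpha-2}\approx X$ gives $r^2\approx X^{-2/(2-\alpha)}$. Then $F(X)\approx X^{-2/(2-\alpha)}$, and so $X\lesssim(\nu t)^{-(2-\alpha)/2}$.
\item In case $(b)$, $r^{-2}|\log r|^{-1/2}\approx X$ gives $r^2\approx 1/(X\sqrt{\log X})$. Then $F(X)\approx 1/(X\sqrt{\log X})$, and inverting at scale $\nu t<1$ gives \eqref{time bound for delort}.
\end{itemize}
If $X$ is not large, the bound holds trivially for $\nu t<1$.

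\textbf{Step 4: the dissipation bounds.} In $(a)$, integrate the pointwise bound in time: $\nu\int_0^T (\nu\tau)^{-(2-\alpha)/2}d\tau\approx(\nu T)^{\alpha/2}$. In $(b)$, $\nu\int_\delta^T \frac{d\tau}{\nu\tau\sqrt{|\log\nu\tau|}}$ is bounded by $\log(T/\delta)/\sqrt{|\log\nu T|}$, using monotonicity of $|\log\nu\tau|$ for $\tau\le T$.

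\textbf{Main obstacle.} The main obstacle is Step 1: making the Nash splitting rigorous on $\T^2$ with $\mathbb{M}_\omega$ uniform in time, and handling the regime where $r$ would exceed the torus scale. In particular, the zero-mean issue does not arise here, because we only use $L^1$ control and not a mean-zero Poincaré inequality.
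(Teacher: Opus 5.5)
Your proposal is correct and follows the paper's route: the mollifier splitting $\|f\|_{L^2}^2\lesssim \ell^2\|\nabla f\|_{L^2}^2+\|f\|_{L^1}\mathbb{M}_f(\ell)\ell^{-2}$ gives a superquadratic Nash inequality, which is fed into the enstrophy identity and integrated through $F(w)=\int_w^\infty \frac{dv}{\Psi(v)}$ to obtain $F(\|\omega^\nu(t)\|_{L^2}^2)\gtrsim \nu t$, and then inverted. The differences are only technical conveniences: you choose the scale by absorption rather than optimization, treat small enstrophy trivially rather than via Poincaré (so zero mean is not needed), integrate $\frac{d}{dt}F(X(t))\ge c\nu$ directly rather than comparing with an ODE solution, and in $(b)$ bound the dissipation using the monotonicity of $|\log(\nu\tau)|$ rather than computing the integral exactly.
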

The restriction to time scales below $\nu^{-1}$ is natural since otherwise the sharp estimate would be provided by the trivial bound \eqref{enstroph trivial bound}. We emphasize that Theorem \ref{T:main} does not require the sequence of initial velocities to stay bounded in $L^2(\T^2)$. In fact, the assumption  $\{u^\nu_0\}_\nu\subset L^2(\T^2)$ is most likely useless (see Remark \ref{R:infinite energy}). As an immediate corollary, we deduce lower bounds on the dissipation timescale. 
\begin{corollary}\label{C:time scale}
    Let $\{u^\nu_0\}_\nu\subset L^2(\T^2)$ be such that $\{\omega^\nu_0\}_\nu\subset \mathcal M (\T^2)$ is bounded. Let $\{\omega^\nu\}_\nu$ be the corresponding solutions to \eqref{NS-Vort} and let $\mathbb{M}_\omega$ be defined as in \eqref{vort on balls}. Let $\{T_\nu\}_\nu$ be a sequence of positive times. 
     \begin{itemize}
        \item[$(a')$] If $\mathbb{M}_\omega(r)\lesssim r^\alpha$ for some $\alpha\in (0,2)$, then 
        $$
       \lim_{\nu\rightarrow 0}  \frac{ T_\nu}{\nu^{-1}}=0 \qquad  \Longrightarrow \qquad  \lim_{\nu\rightarrow 0} \nu \int_0^{T_\nu} \|\omega^\nu (\tau)\|^2_{L^2}\,d\tau=0.
$$
     \item[$(b')$] If $\mathbb{M}_\omega(r)\lesssim |\log r|^{-\sfrac12}$ and, in addition, $\{u^\nu_0\}_\nu\subset L^2(\T^2)$ is strongly compact, for any  $\kappa\in (0,\frac12)$ it holds
      $$
       \lim_{\nu\rightarrow 0} \frac{ T_\nu}{e^{|\log \nu|^\kappa}}=0 \qquad  \Longrightarrow \qquad  \lim_{\nu\rightarrow 0} \nu \int_0^{T_\nu} \|\omega^\nu (\tau)\|^2_{L^2}\,d\tau=0.
$$
    \end{itemize}
\end{corollary}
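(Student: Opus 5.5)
For $(a')$ I would apply \cref{T:main}$(a)$ directly. Since $T_\nu/\nu^{-1}=\nu T_\nu\to 0$, we have $\nu T_\nu<1$ for $\nu$ small, and then
$$
\nu\int_0^{T_\nu}\|\omega^\nu(\tau)\|_{L^2}^2\,d\tau\lesssim (\nu T_\nu)^{\frac{\alpha}{2}}\longrightarrow 0 .
$$
Nothing more is needed here.

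For $(b')$ I would split $[0,T_\nu]=[0,\delta]\cup[\delta,T_\nu]$ for a fixed $\delta>0$, and show two things:
\begin{itemize}
\item[(i)] $\displaystyle\lim_{\delta\to0}\ \limsup_{\nu\to0}\ \nu\int_0^{\delta}\|\omega^\nu\|_{L^2}^2\,d\tau=0$;
\item[(ii)] for each fixed $\delta$, $\displaystyle\nu\int_\delta^{T_\nu}\|\omega^\nu\|_{L^2}^2\,d\tau\to 0$.
\end{itemize}
If $T_\nu\le\delta$, only (i) is needed.

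For (ii), assume $T_\nu>\delta$. For $\nu$ small we have $T_\nu\le e^{|\log\nu|^\kappa}$, hence
$$
\log(\nu T_\nu)\le -|\log\nu|+|\log\nu|^\kappa .
$$
In particular $\nu T_\nu<1$ and $|\log(\nu T_\nu)|\ge \tfrac12|\log \nu|$ eventually. So the second bound of \cref{T:main}$(b)$ gives
$$
\nu\int_\delta^{T_\nu}\|\omega^\nu\|_{L^2}^2\,d\tau\lesssim \frac{|\log\nu|^\kappa+|\log\delta|}{\sqrt{\tfrac12|\log\nu|}}\longrightarrow 0 ,
$$
because $\kappa<\tfrac12$. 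This is exactly where the threshold $\kappa<\tfrac12$ enters.

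The main obstacle is (i), and this is where strong compactness of $\{u_0^\nu\}_\nu$ is used. I would argue by contradiction. Suppose there are $\eps>0$, $\nu_k\to0$ and $\delta_k\to0$ with
$$
\nu_k\int_0^{\delta_k}\|\omega^{\nu_k}\|_{L^2}^2\,d\tau\ge\eps .
$$
By compactness, up to a subsequence, $u_0^{\nu_k}\to u_0$ strongly in $L^2$.

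Next I would show $u^{\nu_k}(\delta_k)\rightharpoonup u_0$ weakly in $L^2$. The energy identity \eqref{NS en bal} bounds $u^{\nu_k}$ uniformly in $L^\infty_tL^2$. Hence $u\otimes u$ is bounded in $L^\infty_tL^1$, so the equation gives $\partial_t u^{\nu_k}$ bounded in $L^\infty_t H^{-s}$ for some large $s$ (using $\nu_k\le1$). Therefore
$$
\|u^{\nu_k}(\delta_k)-u_0^{\nu_k}\|_{H^{-s}}\lesssim\delta_k\to0 ,
$$
and together with the uniform $L^2$ bound this yields the weak convergence. Lower semicontinuity then gives $\|u_0\|_{L^2}\le\liminf_k\|u^{\nu_k}(\delta_k)\|_{L^2}$. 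Finally, by \eqref{NS en bal},
$$
\nu_k\int_0^{\delta_k}\|\omega^{\nu_k}\|_{L^2}^2\,d\tau=\tfrac12\|u_0^{\nu_k}\|_{L^2}^2-\tfrac12\|u^{\nu_k}(\delta_k)\|_{L^2}^2 ,
$$
whose $\limsup$ is at most $\tfrac12\|u_0\|^2_{L^2}-\tfrac12\|u_0\|^2_{L^2}=0$. This contradicts the lower bound $\eps$.

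Combining (i) and (ii): first choose $\delta$ small to make the $[0,\delta]$ contribution small for all small $\nu$, then let $\nu\to0$ to kill the $[\delta,T_\nu]$ contribution. This proves $(b')$.
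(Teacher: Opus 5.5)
Your proof is correct and follows the same route as the paper. Part $(a')$ comes straight from Theorem~\ref{T:main}$(a)$. Part $(b')$ splits at a fixed $\delta$, kills $[0,\delta]$ by strong compactness of the initial data, and kills $[\delta,T_\nu]$ by Theorem~\ref{T:main}$(b)$, with $\kappa<\tfrac12$ entering exactly as you say.

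There are two small differences. First, you prove the short-time no-dissipation step yourself, via weak convergence of $u^{\nu_k}(\delta_k)$, lower semicontinuity and the energy identity, whereas the paper simply cites Proposition 3.1 of \cite{DRP25}. Second, your uniform bounds $\log(T_\nu/\delta)\le|\log\nu|^\kappa+|\log\delta|$ and $|\log(\nu T_\nu)|\ge\tfrac12|\log\nu|$ avoid the paper's split into bounded versus divergent $T_\nu$, which is a slightly cleaner argument.
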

In fact, the above results are not specific to the Navier--Stokes equations, since they apply to any advection diffusion equation with a divergence-free drift (see Remark \ref{R:general AD eq}). The rest of the introduction is devoted to describe the main context in which these results fit, together with the main improvements over the existing literature. We also discuss, and in some cases prove, their optimality.

\subsection{Main context and related literature}
Vorticity concentration relates to the global existence of weak solutions to the two-dimensional Euler equations with measure initial vorticity. This was first noted by Delort \cite{delort1991existence} who proved global existence for a finite energy initial velocity as soon as the singular\footnote{Throughout the whole paper, ``singular measure'' is always understood with respect to the Lebesgue measure.} part of the vorticity has distinguished sign. After more than 30 years, and several contributions \cites{scho95,evans1994hardy,LLX01,vecchi19931,majda1993remarks,CLLV19,ILL20,lant23,Tad01} by many authors, the type of initial data considered by Delort essentially remains the largest class for which global existence of weak solutions is known. Previous results were obtained in the seminal works by DiPerna and Majda \cites{DM87,DM88,diperna1987concentrations}, in which several tools for the study of ``loss of compactness'' issues in nonlinear PDEs have been developed.

The main observation by Delort \cite{delort1991existence} is that the $L^2_{\rm loc} (\T^2\times [0,\infty))$ strong compactness of the approximating velocity fields is not essential to obtain the global existence of a weak solution to the Euler equations. This is due to the special structure of the nonlinearity. He  established an abstract convergence result for time-dependent divergence-free vector fields as soon as the sequence of vorticities does not display spatial concentrations, uniformly in time. In our notation \eqref{vort on balls}, this reads as $\mathbb{M}_\omega (r)\rightarrow 0$ as $r\rightarrow 0$. Since the initial velocity has finite kinetic energy, the initial absolute vorticity does not concentrate. Then, the sign restriction on the singular part of the initial vorticity gives an effective, and the only currently known, way to propagate the non-concentration in time. More precisely, and as originally noted by Majda \cite{majda1993remarks}, the quantitative logarithmic decay
\begin{equation}\label{vort log decay}
    \mathbb{M}_\omega (r)\lesssim \frac{1}{\sqrt{|\log r|}}\qquad \forall r>0,
\end{equation}
can be proved\footnote{At least if the absolutely continuous part of the initial vorticity belongs to $L^p$ with $p>1$. The decay depends otherwise on the equi-integrability of the absolutely continuous part as well \cite{scho95}*{Theorem 3.6}.}. We remark that having $\{u^\nu\}_\nu\subset L^\infty([0,\infty);L^2(\T^2))$ bounded is essential to obtain \eqref{vort log decay}. Later on, Schochet \cite{scho95} made the above mechanisms even more transparent. Although simulations do not indicate concentrations \cites{lant23,LMP21,Kras87},  what happens for measure initial vorticities with no distinguished sign remains a formidable open problem in mathematical fluid dynamics. In this context, the strong $L^2_{\rm loc} (\T^2\times [0,\infty))$ compactness of the sequence of velocity is not expected, and kinematically false \cite{DS24}. The latter answers to a question raised by DiPerna and Majda \cite{diperna1987concentrations}, showing that the decay \eqref{vort log decay} is not even enough to deduce finiteness of the kinetic energy.

More recently, vorticity concentration has been shown to be connected to the so-called anomalous dissipation phenomenon. A sequence of solutions $\{\omega^\nu\}_\nu$ to \eqref{NS-Vort} is said to display ``no anomalous dissipation'' if 
\begin{equation}
    \label{no anom diss}
    \lim_{\nu\rightarrow 0} \nu \int_0^T \|\omega^\nu(\tau)\|^2_{L^2}\,d\tau =0.
\end{equation}
In \cite{DRP24}, the first author and Park proved that no vorticity concentration implies \eqref{no anom diss}. This improved on \cite{CLLS16}, where \eqref{no anom diss} was obtained under the assumption that $\{\omega^\nu_0\}_\nu\subset L^p(\T^2)$ is bounded for some $p>1$, providing, to the best of our knowledge, the very first ``Onsager's superctical'' energy conservation result for physically realizable weak solutions to the incompressible Euler equations in two space dimensions. We refer to the discussions in \cites{CLLS16,DI24,Drivas26,Shv18,Is24} for more details on Onsager's critical regularity and its connection to fluid turbulence. Later on, the relevance of the strong compactness of the sequence of velocities was reported in \cite{LMP21} (see also \cite{JLLL25} for the forced case). As already said before, we emphasize that the spatial non-concentration of the vorticity, even in the quantitative form \eqref{vort log decay}, is not enough to deduce the strong $L^2_{\rm loc} (\T^2\times [0,\infty))$ compactness for the velocity. 

The main result of \cite{DRP24} was then obtained by Elgindi--Lopes--Lopes \cite{ELL25}, independently. Their proof follows the ``Gagliardo--Niremberg  \& superquadratic Gr\"onwall'' strategy introduced in \cite{CLLS16}. We will recall it in \cref{S:enstroph bounds}, since it is also at the core of our approach. The key idea in \cite{ELL25} is to notice that a, uniform in viscosity and in time, decay of the absolute vorticity on balls allows for an ``improved Nash inequality''.  For instance, under the assumption \eqref{vort log decay}, in \cite{ELL25} the inequality
\begin{equation}
    \label{improved Nash from ELL}
    \| \omega^\nu (t)\|^2_{L^2}\lesssim \frac{\Vert \nabla \omega^\nu (t) \Vert_{L^2}}{\sqrt[4]{1+\abs{\log \Vert \nabla  \omega^\nu (t) \Vert_{L^2}}}}
\end{equation}
was established. Being quantitative, this approach allows to achieve \eqref{no anom diss}, at least for any positive time $\delta>0$, with the explicit rate\footnote{The implicit constant in \eqref{ELL rate} depends on $\delta$ and $T$, but it is otherwise independent of $\nu$.}
\begin{equation}
    \label{ELL rate}
    \nu \int_\delta^T \|\omega^\nu(\tau)\|^2_{L^2}\,d\tau\lesssim \frac{1}{\sqrt[4]{|\log \nu|}}\qquad \forall \nu >0,
\end{equation}
as soon as the initial vorticity can be split into an $L^p(\T^2)$ part, with $p>1$, and a nonnegative measure. Very recently, all the aforementioned results have been generalized in \cite{DRP25} and put in a more ``measure theoretic'' language. Moreover, the same rate \eqref{ELL rate} was reported again \cite{DRP25}*{Theorem 1.9}.

In the current work we improve on \eqref{ELL rate}. More precisely, as a  direct consequence of Theorem \ref{T:main}  (b), as well as \cite{DRP25}*{Proposition 5.1}, we obtain the following.

\begin{corollary}
    \label{C:rate for delort}
     Let  $\{u_0^\nu\}_{\nu}\subset L^2(\T^2)$ and $\{\omega_0^\nu\}_{\nu}\subset \mathcal M (\T^2)$ be bounded sequences in their respective norms. Let $p>1$ and assume that $\omega_0^\nu=f^\nu_0 +\mu_0^\nu$ with  $\mu^\nu_0\geq 0$ and $\{f^\nu_0\}_{\nu}\subset L^p(\T^2)$ bounded. Let $\{\omega^\nu\}_{\nu}$ be the corresponding sequence of Leray solutions to \eqref{NS-Vort}. For any $\delta>0$, it holds
     \begin{equation}\label{eq:rate for delort}
     \nu \int_\delta^T \|\omega^\nu (\tau)\|^2_{L^2}\,d\tau\lesssim \frac{\log \frac{T}{\delta}}{\sqrt{|\log (\nu T) |}} \qquad \forall \nu>0,
         \end{equation}
         and any $T>\delta$ such that $\nu T<1$, with the implicit constant independent of $\nu,\delta,T$. In particular, if in addition $\{u_0^\nu\}_{\nu}\subset L^2(\T^2)$ is strongly compact and $\kappa\in (0,\frac12)$ is any number, we have
\begin{equation}\label{timescale delort}
       \lim_{\nu\rightarrow 0} \frac{ T_\nu}{e^{|\log \nu|^\kappa}}=0 \qquad  \Longrightarrow \qquad  \lim_{\nu\rightarrow 0} \nu \int_0^{T_\nu} \|\omega^\nu (\tau)\|^2_{L^2}\,d\tau=0.
\end{equation}
\end{corollary}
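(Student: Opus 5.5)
The plan is to deduce \cref{C:rate for delort} from Theorem \ref{T:main} (b) and Corollary \ref{C:time scale} $(b')$. The only thing to check is that the hypothesis $\mathbb{M}_\omega(r)\lesssim |\log r|^{-\sfrac12}$ holds uniformly in $\nu$ and $t$ under the Delort-type assumptions on the data. This is the content of \cite{DRP25}*{Proposition 5.1}, a viscous, uniform-in-$\nu$ version of the Majda--Schochet estimate \eqref{vort log decay}. I would invoke it directly, but I would also recall why it holds, since this is the only nontrivial input.

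\textbf{Step 1: the non-concentration estimate.} Split $\omega^\nu=f^\nu+\mu^\nu$, where $f^\nu$ solves the advection-diffusion equation driven by $u^\nu$ with initial datum $f^\nu_0$, and $\mu^\nu$ solves the same equation with initial datum $\mu^\nu_0$. Both equations are linear, with the same divergence-free drift $u^\nu$. Consequently:
\begin{itemize}
\item by the maximum principle, $\mu^\nu\ge0$ and $\|\mu^\nu(t)\|_{L^1}=\mu_0^\nu(\T^2)$;
\item $\|f^\nu(t)\|_{L^p}\le\|f^\nu_0\|_{L^p}$;
\item by H\"older, $\int_{B_r}|f^\nu|\lesssim r^{2(1-1/p)}$, which is far better than the required logarithmic rate.
\end{itemize}
For the positive part I would follow Schochet's argument. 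Test the equation for $\mu^\nu$ against a radial cutoff $\varphi_r(\cdot-x)$ with a logarithmic profile, supported in $B_{\sqrt r}(x)$ and equal to $1$ on $B_r(x)$. The transport term $\int \mu^\nu u^\nu\cdot\nabla\varphi_r$ is handled by writing $\mu^\nu=\omega^\nu-f^\nu$. The term $\int\omega^\nu u^\nu\cdot\nabla\varphi_r$ is then rewritten in Delort's symmetrized form, or via the stress tensor $u\otimes u-\frac12|u|^2 I$ against $\nabla^2\varphi_r$. It is therefore controlled by $\|u^\nu\|_{L^2}^2\|\nabla^2\varphi_r\|_{L^\infty}$-type quantities, with the logarithmic profile chosen so that these are $\lesssim |\log r|^{-1}$. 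The energy bound follows from \eqref{NS en bal} and the boundedness of $\{u^\nu_0\}$. The viscous term gives $\nu\int\mu^\nu\Delta\varphi_r$, which is harmless on bounded time intervals, or absorbed in the same logarithmic bound. Optimizing the profile, as in Majda, yields $\int_{B_r}\mu^\nu\lesssim|\log r|^{-1/2}$. I expect this to be the main obstacle, but it is precisely \cite{DRP25}*{Proposition 5.1}, so in the write-up I would just cite it.

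\textbf{Step 2: conclusion.} With $\mathbb{M}_\omega(r)\lesssim|\log r|^{-\sfrac12}$ established, the boundedness of $\{\omega_0^\nu\}\subset\mathcal M(\T^2)$ and $\{u^\nu_0\}\subset L^2(\T^2)$ place us in the setting of Theorem \ref{T:main} (b), and \eqref{eq:rate for delort} is exactly its second estimate. Integrating \eqref{time bound for delort} from $\delta$ to $T$ indeed gives
\[
\int_\delta^T\frac{d\tau}{\tau\sqrt{|\log(\nu\tau)|}}\le\frac{\log(T/\delta)}{\sqrt{|\log(\nu T)|}},
\]
using that $|\log(\nu\tau)|\ge|\log(\nu T)|$ for $\tau\le T$, $\nu T<1$. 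Finally, \eqref{timescale delort} is Corollary \ref{C:time scale} $(b')$, whose additional hypothesis, strong compactness of $\{u^\nu_0\}$ in $L^2$, is assumed. That hypothesis is what controls the initial layer $[0,\delta]$, where \eqref{eq:rate for delort} degenerates, via the absence of anomalous dissipation near $t=0$ as in \cite{DRP24}.
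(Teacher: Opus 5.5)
Your proposal is correct and follows the paper's argument: the corollary comes from feeding the logarithmic non-concentration $\mathbb{M}_\omega(r)\lesssim|\log r|^{-1/2}$ supplied by \cite{DRP25}*{Proposition 5.1} into Theorem \ref{T:main} (b), and \eqref{timescale delort} is then Corollary \ref{C:time scale} $(b')$, which is exactly how you conclude. One caution about your heuristic for Step 1, which you rightly do not rely on: any cutoff equal to $1$ on $B_r(x)$ and supported in $B_{\sqrt r}(x)$ has $\|\nabla^2\varphi_r\|_{L^\infty}\gtrsim r^{-1}$, so bounding the transport term by $\|u^\nu\|^2_{L^2}\|\nabla^2\varphi_r\|_{L^\infty}$ can never give smallness; the actual mechanism is static, since at each fixed time $\mu^\nu(t)\geq 0$ gives $\int_{B_r(x)}\mu^\nu(t)\leq\int\mu^\nu(t)\varphi_r$, and $\int\omega^\nu(t)\varphi_r=-\int u^\nu(t)\cdot\nabla^\perp\varphi_r\leq\|u^\nu_0\|_{L^2}\|\nabla\varphi_r\|_{L^2}\lesssim|\log r|^{-1/2}$ for the logarithmic profile, with the $f^\nu$ part handled by H\"older as you say.
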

The only use of the strong compactness of the initial data in proving \eqref{timescale delort} is to rule out dissipation in arbitrarily short time intervals\footnote{See for instance \cite{DRP25}*{Proposition 3.1}.} $(0,\delta)$. Such assumption is automatically satisfied if the vorticities decay algebraically on balls \cite{lant23}*{Corollary 2.15}. The claim \eqref{timescale delort} gives a lower bound on the timescale which is able to deplete the kinetic energy. This improves on \cite{DRP25}*{Theorem 1.9}, since any timescale $T_\nu\sim |\log \nu|^\beta$ is consistent with the antecedent in \eqref{timescale delort} for any $\beta>0$, no matter how large. 

Corollary \ref{C:time scale} (a') shows that  vorticity measures with an algebraic decay on balls need at least an amount of time $T_\nu\gtrsim \nu^{-1}$ in order to deplete the kinetic energy, independently on $\alpha\in (0,2)$. This generalizes the bound obtained in \cite{CLLS16} for $L^p(\T^2)$ vorticities, $p>1$. Thus, at the level of the timescale which is able to retain a nontrivial dissipation uniformly in the viscosity, there is no difference between $L^p(\T^2)$, for any $p>1$, and (possibly singular) measures, as soon as the latter have an algebraic decay on balls $\mathbb{M}_\omega (r)\lesssim r^\alpha$, for some $\alpha\in (0,2)$. On the other hand, there is a difference in the vanishing rate of the dissipation for a fixed time $T$. Indeed,  for a bounded sequence $\{\omega_0^\nu\}_\nu\subset L^p(\T^2)$, it was proved in \cite{CLLS16} that\footnote{The rate \eqref{rate for Lp} can be easily shown to be sharp on $\R^2$ by appropriately rescaling a  radially symmetric solution to the heat equation.}
\begin{equation}\label{rate for Lp}
     \nu \int_0^T \|\omega^\nu (\tau)\|^2_{L^2}\,d\tau\lesssim (\nu T)^{\frac{2(p-1)}{p}} \qquad \forall \nu, T>0.
\end{equation}
By the H\"older inequality it is clear that $\{\omega_0^\nu\}_\nu\subset L^p(\T^2)$ bounded implies 
$$
\mathbb{M}_\omega (r)\lesssim r^{\alpha_p}\qquad \forall r>0,\qquad \text{with }\, \alpha_p:=\frac{2(p-1)}{p}.
$$
If we apply  Theorem \ref{T:main} (a) to get a rate, we would then obtain $\lesssim \nu^{\sfrac{(p-1)}{p}}$, worsening \eqref{rate for Lp} by a square root. Nonetheless, as we shall discuss below, this is unavoidable and all the above results/considerations are essentially optimal.

\subsection{Sharpness of the bounds} It is natural to wonder about the sharpness of the results we have obtained here. We start discussing the case of Theorem \ref{T:main} (a). For simplicity, we work on the whole space $\R^2$. It is much easier to handle radially symmetric vorticities since, in this case, $u^\nu\cdot \nabla \omega^\nu\equiv 0$ and  \eqref{NS-Vort} reduces to the heat equation. Then, restricting to $\alpha=1$ is the most natural choice since it allows to consider the one-dimensional Hausdorff measure on the unit circle as an initial vorticity. It can be proved that this saturates the rate obtained in Theorem \ref{T:main} (a).
\begin{proposition}
    \label{P:saturate algebraic rate}
    There exists a compactly supported divergence-free $u_0\in L^\infty(\R^2)$ with $\omega_0\in \mathcal M(\R^2)$, such that, denoting by $\{\omega^\nu\}_\nu$ the corresponding solutions to \eqref{NS-Vort} on $\R^2\times [0,\infty)$, it holds 
    \begin{equation}
        \label{1d decay on balls}
        \mathbb{M}_\omega (r)\lesssim r\qquad \forall r>0
    \end{equation}
 and
 \begin{equation}
     \label{1d rate saturation}
     \nu\int_0^1\|\omega^\nu (\tau)\|^2_{L^2}\,d\tau\gtrsim \sqrt{\nu}\qquad \forall\, 0<\nu\ll 1.
 \end{equation}
\end{proposition}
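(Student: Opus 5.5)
We take $\omega_0:=\mathcal H^1\llcorner \partial B_1$, the arc-length measure on the unit circle, and check the three claims in turn.

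\emph{The initial velocity.} Since $\omega_0$ is radial, the associated velocity is the Biot--Savart field
$$
u_0(x)=\frac{x^\perp}{2\pi|x|^2}\,\omega_0(B_{|x|})=\frac{x^\perp}{|x|^2}\mathbbm{1}_{\{|x|>1\}}.
$$
This field is bounded and divergence-free. It is not compactly supported, because the total circulation $2\pi$ is nonzero. To obtain compact support we subtract a radial, zero-circulation correction: set
$$
\omega_0:=\mathcal H^1\llcorner\partial B_1-\mathcal H^1\llcorner\partial B_2 .
$$
This has zero total mass, and then $u_0=\frac{x^\perp}{|x|^2}\mathbbm 1_{\{1<|x|<2\}}$ is compactly supported and belongs to $L^\infty(\R^2)$.

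\emph{Reduction to the heat equation.} Radial vorticity yields an azimuthal velocity, and azimuthal velocity advects radial functions trivially. Hence $\omega^\nu=G_{\nu t}*\omega_0$, where $G_s(x)=(4\pi s)^{-1}e^{-|x|^2/4s}$, is the solution (uniqueness from \cite{GG05}). Everything is now explicit.

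\emph{Decay on balls \eqref{1d decay on balls}.} Heat convolution preserves the bound $|\omega_0|(B_r(x))\le 2\pi r$ up to constants. Indeed,
$$
|\omega^\nu(t)|(B_r(x))\le \int G_{\nu t}(z)\,|\omega_0|(B_r(x-z))\,dz\le \sup_y|\omega_0|(B_r(y)),
$$
and $|\omega_0|(B_r(y))\le \mathcal H^1(\partial B_1\cap B_r(y))+\mathcal H^1(\partial B_2\cap B_r(y))\lesssim r$ for every $r>0$ and every $y$. (For $r$ large, the total mass $4\pi+2\pi$ is $\lesssim r$ as well.) This gives the required bound, uniformly in $\nu$ and $t$.

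\emph{The lower bound \eqref{1d rate saturation}: the main step.} We need $\|\omega^\nu(\tau)\|_{L^2}^2\gtrsim(\nu\tau)^{-1/2}$ for $\nu\tau\ll1$. Write $s=\nu\tau$ and restrict to the annulus $A_s:=\{\,||x|-1|<\sqrt s\,\}$, which has area $\approx\sqrt s$. For $x\in A_s$ we claim $(G_s*\mathcal H^1\llcorner\partial B_1)(x)\gtrsim s^{-1/2}$. To see this, note that the arc of $\partial B_1$ within distance $2\sqrt s$ of $x$ has length $\gtrsim\sqrt s$, and on that arc $G_s\gtrsim s^{-1}$; the product is $\gtrsim s^{-1/2}$. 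The contribution of the second circle is negative but exponentially small there, $O(s^{-1}e^{-c/s})$, so it is negligible for $s$ small. Therefore
$$
\|\omega^\nu(\tau)\|_{L^2}^2\gtrsim s^{-1}\cdot\sqrt s=(\nu\tau)^{-1/2}.
$$
Integrating,
$$
\nu\int_0^1\|\omega^\nu\|_{L^2}^2\,d\tau\gtrsim \nu\int_0^{1}(\nu\tau)^{-1/2}\,d\tau=2\sqrt\nu
$$
for $0<\nu\ll1$. This requires $\nu\cdot 1$ to lie below the smallness threshold for $s$, which holds for $\nu\ll1$.

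The only delicate points are the heat-kernel lower bound near the circle and the control of the cancellation from the second circle. Both are elementary, since the two circles are at distance $1$ apart while the heat kernel has width $\sqrt s\ll1$.
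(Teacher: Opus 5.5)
Your proof is correct once you fix one slip in the choice of the correction. The measure $\mathcal H^1\llcorner\partial B_2$ has mass $4\pi$, so $\mathcal H^1\llcorner\partial B_1-\mathcal H^1\llcorner\partial B_2$ has total mass $-2\pi$, not $0$. Its Biot--Savart velocity equals $-x^\perp/|x|^2$ for $|x|>2$, which is neither compactly supported nor in $L^2$. The field $\frac{x^\perp}{|x|^2}\mathbbm{1}_{\{1<|x|<2\}}$ that you wrote down is in fact the velocity of $\omega_0=\mathcal H^1\llcorner\partial B_1-\frac12\mathcal H^1\llcorner\partial B_2$, since the tangential jump at $|x|=2$ is $\frac12$. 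With this weight every later step goes through unchanged; the total variation simply becomes $4\pi$.

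Beyond that, your route differs from the paper's at every step.

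\emph{The paper's route.} It keeps a single circle and cuts off $x^\perp|x|^{-2}\mathbbm{1}_{B_1^c}$ with a smooth radial $\tilde\chi$. This gives $\omega_0=\mathcal H^1\llcorner\mathbb S^1+\chi'(|x|)/|x|$, i.e.\ the circle measure plus an $L^2$ remainder.
\begin{itemize}
\item For the lower bound it works in Fourier variables. It uses $\widehat{\mathcal H^1\llcorner\mathbb S^1}=2\pi J_0(|\xi|)$, the asymptotics $J_0(\rho)\sim\cos(\rho-\frac\pi4)/\sqrt\rho$ and the Riemann--Lebesgue lemma to get $\|\omega^\nu_2(t)\|_{L^2}^2\gtrsim 1+(\nu t)^{-1/2}$. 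The cross term with the $L^2$ remainder is absorbed by Young's inequality.
\item For the decay on balls it uses that the heat-evolved circle measure stays nonnegative. It tests against a cutoff, integrates by parts onto the heat-evolved velocity, and applies the maximum principle $\|v^\nu(t)\|_{L^\infty}\le\|v_0\|_{L^\infty}$.
\end{itemize}

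\emph{Your route.} You replace all of this with elementary physical-space facts:
\begin{itemize}
\item a pointwise heat-kernel lower bound on the $\sqrt{s}$-neighbourhood of the circle;
\item a pointwise, exponentially small bound on the singular correction, in place of Young's inequality;
\item the Fubini observation that $\sup_y|\mu|(B_r(y))$ cannot increase under convolution with a probability density, which needs neither positivity nor the velocity field.
\end{itemize}

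\emph{What each approach buys.} Your version is more robust. The lower bound only uses that the measure gives mass $\gtrsim r^\alpha$ to balls of radius $r$ centred on its support. For $\mu\ge0$ one has $\|G_s*\mu\|_{L^2}^2=\iint G_{2s}(x-y)\,d\mu(x)\,d\mu(y)\gtrsim s^{-1}\int\mu(B_{\sqrt s}(x))\,d\mu(x)$, so no information on $\hat\mu$ is needed. It would therefore apply, for instance, to radial superpositions of circles over a Cantor set of radii. The paper's Fourier computation, in turn, is what ties the saturation to the decay of spherical averages of $|\hat\mu|^2$, as discussed in its subsequent remark.
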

We believe that, besides giving a simple scenario in which it is  possible to control the Navier--Stokes dynamics uniformly in the viscosity, there is nothing special about $\alpha=1$ with respect to any other value. In particular, and with some extra effort, it should be possible to prove the optimality of the rate obtained in Theorem \ref{T:main} (a) for any $\alpha\in (0,2)$.

Regarding the dissipative timescale, the one obtained in Corollary  \ref{C:time scale} (a') is optimal. Indeed, it is a simple exercise to show that the kinetic energy always depletes for times $T_\nu\gtrsim \nu^{-1}$, no matter the assumption on the initial data (see for instance \cite{DRP25}*{Remark 5.2}). The time $T_\nu\sim \nu^{-1}$ corresponds to the diffusive timescale dictated by the heat equation. 

We were not able to provide an example saturating Theorem \ref{T:main} (b), or its more effective version from Corollary \ref{C:rate for delort}. Although it is certainly possible to find vorticities such that $\mathbb{M}_\omega (r)\sim |\log r|^{-\sfrac12}$, the most natural ones do not seem to saturate the corresponding rate of the dissipation. We collect a couple of these ``failed attempts'' in \cref{S:failed tries}, with the hope that they are instructive. They both have, for very different reasons, a dissipation of the order $\lesssim |\log \nu|^{-1}$, thus missing \eqref{eq:rate for delort} by a square. More precisely, one of the examples shows the following.

\begin{proposition}\label{P:fake delort sharp}
    There exists a sequence $\{u^\nu_0\}_\nu$ of smooth compactly supported divergence-free vector fields  such that $\{u^\nu_0\}_\nu\subset L^2(\R^2)$ and $\{\omega^\nu_0\}_\nu\subset \mathcal M(\R^2)$ are bounded. Moreover,  $\omega^\nu_0 = f^\nu_0+\mu^\nu_0$ with $\{f^\nu_0\}_\nu \subset L^2(\R^2)$ bounded and $\mu_0^\nu\geq 0$. In addition, the corresponding sequence of Leray solutions to \eqref{NS-Vort} on $\R^2\times [0,\infty)$ enjoys
    $$
     \|\omega^\nu (t)\|^2_{L^2}\sim \frac{1}{\nu |\log \nu  |}\qquad \forall\, 0<\nu\ll 1,\, t\in (0,1).
    $$
\end{proposition}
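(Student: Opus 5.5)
The plan is to build the example from \emph{radially symmetric} data. For radial vorticity $u^\nu\cdot\nabla\omega^\nu\equiv0$, so \eqref{NS-Vort} reduces to the heat equation $\partial_t\omega^\nu=\nu\Delta\omega^\nu$, and the enstrophy can be computed on the Fourier side. The positive part $\mu_0^\nu$ will be a point vortex of small mass $m_\nu:=|\log\nu|^{-1/2}$, mollified at the diffusive scale $\sqrt\nu$. The $L^2$ part $f_0^\nu$ will be a fixed smooth negative bump carrying the opposite circulation, which makes the velocity compactly supported and of finite energy.

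\textbf{Construction.} Fix radial $\rho,\chi\in C_c^\infty(\R^2)$ with $\rho,\chi\ge0$, $\int\rho=\int\chi=1$ and $\operatorname{supp}\rho,\operatorname{supp}\chi\subset B_1$. Set
\[
\mu_0^\nu:=m_\nu\,\rho_{\sqrt\nu},\qquad f_0^\nu:=-m_\nu\,\chi,\qquad \omega_0^\nu:=\mu_0^\nu+f_0^\nu,
\]
where $\rho_\varepsilon:=\varepsilon^{-2}\rho(\cdot/\varepsilon)$. The function $\omega_0^\nu$ is radial, smooth, supported in $B_1$, and has zero total integral.

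\textbf{Checking the bounds on the data.} Let $u_0^\nu$ be the radial (Biot--Savart) velocity of $\omega_0^\nu$. Its angular component is $\frac{1}{2\pi r}\int_{B_r}\omega_0^\nu$. This vanishes for $r\ge1$ because the circulation is zero, so $u_0^\nu$ is smooth and compactly supported. For $r\le1$ it is bounded by $\lesssim m_\nu\min(r/\nu,1/r)+m_\nu r$. Hence
\[
\|u_0^\nu\|_{L^2}^2\lesssim m_\nu^2\Bigl(1+\int_{\sqrt\nu}^1\frac{dr}{r}\Bigr)\lesssim m_\nu^2\,|\log\nu|\lesssim 1 .
\]
The remaining bounds are immediate: $\|\omega_0^\nu\|_{\mathcal M}\le 2m_\nu$, $\|f_0^\nu\|_{L^2}=m_\nu\|\chi\|_{L^2}$, and $\mu_0^\nu\ge0$.

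\textbf{Enstrophy.} The solution is $\omega^\nu(t)=m_\nu\bigl(G_{\nu t}*\rho_{\sqrt\nu}-G_{\nu t}*\chi\bigr)$, with $G$ the heat kernel. By Plancherel and the change of variables $\eta=\sqrt\nu\,\xi$,
\[
\|G_{\nu t}*\rho_{\sqrt\nu}\|_{L^2}^2=c\int|\hat\rho(\sqrt\nu\xi)|^2e^{-2\nu t|\xi|^2}d\xi=\frac{c}{\nu}\int|\hat\rho(\eta)|^2e^{-2t|\eta|^2}d\eta .
\]
For $t\in(0,1)$ the last integral lies between $\int|\hat\rho|^2e^{-2|\eta|^2}>0$ and $\|\hat\rho\|_{L^2}^2$, so this term is $\sim\nu^{-1}$. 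The other term satisfies $\|G_{\nu t}*\chi\|_{L^2}\le\|\chi\|_{L^2}$, which is $O(1)\ll\nu^{-1/2}$. By the triangle inequality,
\[
\|\omega^\nu(t)\|_{L^2}^2\sim\frac{m_\nu^2}{\nu}=\frac{1}{\nu|\log\nu|}\qquad\text{for }0<\nu\ll1,\ t\in(0,1).
\]

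\textbf{Main obstacle.} The only delicate step is the choice of scales. A genuine point mass would give enstrophy $\sim m^2/(\nu t)$, which blows up as $t\to0$, and its velocity has infinite energy. Mollifying at exactly the scale $\sqrt\nu$ caps the enstrophy at $m^2/\nu$ uniformly in $t\in(0,1)$. At the same time it makes the kinetic energy grow only like $m^2|\log\nu|$, so the choice $m_\nu=|\log\nu|^{-1/2}$ is forced by the energy bound. The zero-circulation compensator removes the logarithmic divergence of the energy at infinity.
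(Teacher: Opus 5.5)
Your proposal is correct and is essentially the paper's construction. The paper's $\mu^\nu_0(x)=\frac{1}{\nu\sqrt{|\log\nu|}}\mu_0(x/\sqrt\nu)$ is exactly your $m_\nu\rho_{\sqrt\nu}$. Its $L^2$ part comes from multiplying the velocity by a radial cutoff, and for $\nu<1$ it equals $c_0|\log\nu|^{-1/2}\chi'(|x|)/|x|$ with $c_0=\int_0^1 r\mu_0(r)\,dr$: a fixed profile of opposite circulation times $m_\nu$, just like your $-m_\nu\chi$. The enstrophy is then compared through the same scaling identity $\|\mu^\nu(t)\|^2_{L^2}=(\nu|\log\nu|)^{-1}\|\mu(t)\|^2_{L^2}$, which you obtain via Plancherel, with a Young inequality in place of your triangle inequality.
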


Since, as we shall see in \cref{S:enstroph bounds}, the improved Nash inequality \eqref{improved Nash from ELL} is nearly the only tool used to obtain \eqref{eq:rate for delort}, a preliminary question is about the sharpness of the inequality itself. It turns out that \eqref{improved Nash from ELL} is optimal (see Proposition  \ref{P:nash ineq sharp}). However, the example we use to saturate \eqref{improved Nash from ELL} is a measure  concentrating on a rather sparse Cantor set with ``logarithmically zero'' Hausdorff dimension. The lack of control on the Navier--Stokes dynamics from such kind of  measure initial vorticity prevents us to use it to provide a reasonably sharp lower bound on its  dissipation. Summing up, the optimality of the Nash inequality \eqref{improved Nash from ELL} together with the sharpness proved in Proposition \ref{P:saturate algebraic rate} in the case of a linear decay of the measure on balls, seem to safely suggest the following. 

\begin{conjecture}\label{Conj}
 Let $\delta\in (0,1)$ and $p>1$. There exists  a divergence-free $u_0\in L^2(\T^2)$ with $\omega_0\in\mathcal M (\T^2)$,   $\omega_0=f_0 +\mu_0$ for some  $\mu_0\geq 0$ and $f_0\in L^p(\T^2)$, whose corresponding sequence $\{\omega^\nu\}_{\nu}$ of Leray solutions to \eqref{NS-Vort} enjoys
  $$
      \nu \int_\delta^1 \|\omega^\nu (\tau)\|^2_{L^2}\,d\tau\gtrsim \frac{1}{\sqrt{|\log \nu  |}}\qquad \forall\, 0<\nu\ll 1.
        $$   
\end{conjecture}

If one believes in the optimality of \eqref{time bound for delort}, which is the scenario suggested by the sharpness of the improved Nash inequality from Proposition  \ref{P:nash ineq sharp}, the above conjecture should not hold for $\delta=0$. What is certainly true is that, in the case a better bound exists, it cannot be proved with the same strategy we have used here. Aiming to a broader view, a better understanding of Conjecture \ref{Conj} might shield some light on the dynamics of the solutions constructed by Delort \cite{delort1991existence}, for which nothing seems to be known.

\bigskip

\section{Improved Nash inequalities}\label{S:nash ineq}
Let $f:\T^2 \rightarrow \R$ be a sufficiently smooth function with zero-average\footnote{This is only needed to rule out nontrivial constant functions, for which inequalities like \eqref{classical nash} are clearly false. }. The classical Nash inequality \cite{Nash58} reads as 
\begin{equation}
    \label{classical nash}
    \|f\|^2_{L^2}\lesssim \|f\|_{L^1}\|\nabla f\|_{L^2},
\end{equation}
for a geometric implicit constant. We are mainly interested in the regime where $\|f\|_{L^1}\sim 1$ and $\|\nabla f\|_{L^2}\gg 1$. This means that, for our purposes, the higher the power on the left-hand-side in \eqref{classical nash} the better. More generally, we are interested in replacing the square on $\|f\|_{L^2}$ with a superquadratic\footnote{Meaning that, at infinity, it diverges faster than a parabola.} function under suitable assumptions on $f$. The perhaps easiest case is when $\|f\|_{L^p}\sim 1$ for some $p>1$. In this setting, the well-known Gagliardo--Nirenberg inequality yields to 
$$
\|f\|^{\frac{2}{2-p}}_{L^2}\lesssim  \|\nabla f\|_{L^2} \qquad \text{if } p\in (1,2).
$$
The case where $\|f\|_{L^1}\sim 1$ is the only available bound is more interesting. We set 
$$
\mathbb{M}_f (r):=\sup_{x \in \T^2} \int_{B_r (x)}|f(y)|\,dy \qquad \forall r>0.
$$
Whenever $\mathbb{M}_f (r)\rightarrow 0$ as $r\rightarrow 0$, it is possible to improve on \eqref{classical nash}. Moreover, the corresponding inequality is uniform among all functions with the same decay on balls and the same total mass. To the best of our knowledge, this was first noted in \cite{ELL25}.

\begin{proposition}[\cite{ELL25}*{Proposition 3.2}]\label{P: nash superquadratic ELL}
Let $f\in W^{1,2}(\T^2)$ be with zero average. Assume that $\|f\|_{L^1}\leq M$ for a constant $M>0$ and $\mathbb{M}_f(r)\leq \phi (r)$ for all $r>0$, for a  function $\phi\in C^0([0,\infty))$ such that $\phi (r)\rightarrow 0$ as $r\rightarrow 0$. There exists an increasing superquadratic function $\Psi\in C^1([0,\infty))$, depending only on $M$ and $\phi$, such that 
$$
\Psi\left(\|f\|^2_{L^2}\right) \leq \|\nabla f\|^2_{L^2}.
$$
\end{proposition}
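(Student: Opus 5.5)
We split $f$ into low and high frequencies at a scale $r$ and then optimize over $r$. Let $\rho_r$ be a standard mollifier supported in $B_r$ with $\|\rho_r\|_{L^\infty}\lesssim r^{-2}$, and write
$$
\|f\|_{L^2}^2=\int f\,(f-f*\rho_r)\,dx+\int f\,(f*\rho_r)\,dx .
$$

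\textbf{High frequencies.} For the first term we use Cauchy--Schwarz together with $\|f-f*\rho_r\|_{L^2}\lesssim r\|\nabla f\|_{L^2}$, which gives
$$
\Big|\int f\,(f-f*\rho_r)\Big|\lesssim r\,\|f\|_{L^2}\|\nabla f\|_{L^2}.
$$

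\textbf{Low frequencies.} For the second term we use the decay on balls. Pointwise,
$$
|f*\rho_r(x)|\le \|\rho_r\|_{L^\infty}\int_{B_r(x)}|f|\lesssim r^{-2}\phi(r).
$$
Here we are replacing the classical Nash bound $\|f\|_{L^1}\|\rho_r\|_{L^\infty}\lesssim M r^{-2}$ by the smaller quantity $\phi(r) r^{-2}$. Hence
$$
\Big|\int f\,(f*\rho_r)\Big|\lesssim M\phi(r)\,r^{-2}.
$$

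\textbf{Optimizing in $r$.} Set $X=\|f\|_{L^2}^2$ and $Y=\|\nabla f\|_{L^2}$. Combining the two bounds,
$$
X\le C\big(r\sqrt X\,Y+M\phi(r)r^{-2}\big)\qquad\text{for all } r\in(0,1).
$$
We choose $r$ so that the second term is at most $X/2$. Define
$$
h(r):=\sup_{s\ge r} s^{-2}\phi(s),
$$
which is nonincreasing and tends to $\infty$ as $r\to0$. Take $r=r(X)$ to be the largest $r$ with $2CM h(r)\ge X$; this is well defined for $X$ large. Since $\phi\le M$, we have $h(r)\lesssim r^{-2}$, so $r(X)\lesssim X^{-1/2}$. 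Absorbing the second term then gives
$$
X\le 2C\,r(X)\sqrt X\,Y,\qquad\text{that is}\qquad Y^2\ge \frac{X}{4C^2\,r(X)^2}=:\Psi_0(X).
$$

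\textbf{Superquadraticity.} We need $\Psi_0(X)/X^2=\big(4C^2X r(X)^2\big)^{-1}\to\infty$, i.e. $X r(X)^2\to0$. By the choice of $r$, $X\approx 2CMh(r)$, and we need $r^2h(r)\to0$. This means $\sup_{s\ge r} s^{-2}\phi(s)\,r^2\to0$. Splitting the supremum at $s=\sqrt r$: for $s\in[r,\sqrt r]$ the quantity is at most $\sup_{s\le\sqrt r}\phi(s)$, and for $s\ge\sqrt r$ it is at most $Mr$. Both vanish as $r\to0$, so $\Psi_0$ is superquadratic.

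For small $X$ we use the Poincaré inequality $Y^2\gtrsim X$, valid since $f$ has zero average, to define $\Psi$ near zero.

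\textbf{Main obstacle.} The delicate step is producing a genuinely increasing $C^1$ function $\Psi\le\Psi_0$ that keeps superquadratic growth, since $\Psi_0$ may be irregular. We would take an increasing minorant: replace $X/r(X)^2$ by its infimum over $[X,\infty)$ and then mollify from below. Since $\Psi_0(X)/X^2\to\infty$, the lower envelope $\inf_{Z\ge X}\Psi_0(Z)/Z^2$ is increasing to $\infty$. So $\Psi(X):=X^2\cdot$(a smooth increasing function below that envelope) works, glued to the Poincaré bound $cX$ for small $X$. By construction $\Psi$ depends only on $M$ and $\phi$.
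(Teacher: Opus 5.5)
Your argument is correct and is the paper's approach. The paper mollifies at scale $\ell$, bounds the high-frequency part by $\ell\|\nabla f\|_{L^2}$, and bounds the low-frequency part by $\|f\|_{L^1}\,\mathbb{M}_f(\ell)/\ell^2$. It then optimizes $\ell$ and leaves the details to the reader. You differ only in splitting $\|f\|_{L^2}^2$ bilinearly and in choosing the scale as a function of $X=\|f\|_{L^2}^2$ by absorption, rather than balancing $\ell^4/\phi(\ell)\sim\|\nabla f\|_{L^2}^{-2}$ and inverting. You also supply the omitted details: the monotone envelope $h$, superquadraticity via the split at $\sqrt r$, and the $C^1$ minorant.

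Two harmless normalizations should be made explicit. First, replace $\phi$ by $\min(\phi,M)$; this is legitimate because $\mathbb{M}_f\le\|f\|_{L^1}\le M$, and it is what makes $h$ finite. Second, $h$ need not tend to $\infty$ (e.g.\ $\phi(r)\lesssim r^2$). If it stays bounded, letting $r\to0$ in your inequality gives $X\le CM\sup h$, so the claim is vacuous for large $X$.
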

 
When\footnote{This would follow, for instance, from $f\in H^{-1}\cap \left(L^p + \mathcal M_{\geq 0}\right)$ for some $p>1$ \cite{scho95}*{Theorem 3.6}.} $\phi(r) = |\log r|^{-\sfrac{1}{2}}$, the inequality \eqref{improved Nash from ELL} was then obtained \cite{ELL25}*{Proposition 4.1}. The proof given in \cite{ELL25} uses Fourier analysis and Riesz--Thorin interpolation. Here we follow a more direct physical space based approach as described below.

Let $\{\rho_\ell\}_\ell\subset C^\infty_c (\R^2)$ be a family of Friedrichs mollifiers, $\ell\in (0,1)$. We split
\begin{align}
\|f\|^2_{L^2}&\lesssim \|f - f*\rho_\ell\|^2_{L^2}+ \| f*\rho_\ell\|^2_{L^2} \\
&\lesssim  \ell^2 \|\nabla f\|^2_{L^2} + \|f*\rho_\ell\|_{L^1}\|f*\rho_\ell\|_{L^\infty} \\
&\lesssim \ell^2 \|\nabla f\|^2_{L^2} + \|f\|_{L^1}\|f*\rho_\ell\|_{L^\infty}, 
\end{align}
for a geometric implicit constant. Furthermore, the last term can be estimated by 
$$
\|f*\rho_\ell\|_{L^\infty}=\sup_{x\in \T^2}\left| \int f(y)\rho_\ell (x-y)\,dy\right|\leq \frac{\mathbb{M}_f (\ell)}{\ell^2}.
$$
We have achieved 
\begin{equation}\label{main splitting mollif}
\|f\|^2_{L^2}\lesssim \ell^2 \|\nabla f\|^2_{L^2} + \|f\|_{L^1}\frac{\mathbb{M}_f (\ell)}{\ell^2}\qquad \forall \ell\in (0,1).
\end{equation}
Note that, if we now use the trivial bound $\mathbb{M}_f(\ell)\leq \|f\|_{L^1}$, we obtain 
$$
\|f\|^2_{L^2}\lesssim \ell^2 \|\nabla f\|^2_{L^2} + \frac{\|f\|^2_{L^1}}{\ell^2}\qquad \forall \ell\in (0,1).
$$
We can then optimize in $\ell$ to find\footnote{Note that, thanks to the zero average assumption on $f$, its gradient cannot be identically zero without $f$ being trivial.}
\begin{equation}
    \label{ell opt}
\ell^2_{\rm opt} = \frac{\| f\|_{L^1}}{\|\nabla f\|_{L^2}},
\end{equation}
from which the classical Nash inequality \eqref{classical nash} immediately follows. To be rigorous, we are in fact  allowed to make the choice \eqref{ell opt} only if $\ell_{\rm opt} <1$, i.e. when $\| f\|_{L^1}< \| \nabla f\|_{L^2}$. However, in the case $\| \nabla f\|_{L^2}\leq \| f\|_{L^1}$ the Nash inequality \eqref{classical nash} is a direct consequence of the Poincaré inequality
$$
\|f\|^2_{L^2}\lesssim \|\nabla f\|^2_{L^2}\lesssim   \| f\|_{L^1} \|\nabla f\|_{L^2}.
$$
In this simple procedure we have not used the additional smallness of $\mathbb{M}_f(\ell)\leq \phi (\ell)$. In this case, by also using  $\|f\|_{L^1}\leq M$, the optimal choice of $\ell$ would be such that 
$$
\frac{\ell^4_{\rm opt}}{\phi (\ell_{\rm opt}) }\sim \frac{1}{\|\nabla f\|^2_{L^2}}.
$$
Since $\phi(\ell)\rightarrow 0$ as $\ell\rightarrow 0$, this leads to Proposition \ref{P: nash superquadratic ELL}. Details are left to the reader. 

In the next proposition we summarize the main implication of the approach in the two cases we are mainly interested in. Again, we emphasize that the inequalities we are going to establish should be thought in the regime where $\|\nabla f\|_{L^2}$ is larger than a given universal constant since, otherwise, the Poincaré inequality would provide a better estimate.

\begin{proposition}
    \label{P:two nash ineq}
    Under the assumptions of Proposition \ref{P: nash superquadratic ELL}, the following hold.
    \begin{itemize}
        \item[$(1)$] If $\phi (r)\lesssim r^\alpha$ for some $\alpha\in (0,2)$, then 
        \begin{equation}
            \label{Nash algebraic}
            \|f\|_{L^2}^{\frac{4-\alpha}{2-\alpha}}\lesssim \|\nabla f\|_{L^2}.
        \end{equation}
        \item[$(2)$] If $\phi (r)\lesssim |\log r|^{-\sfrac{1}{2}}$, then
        \begin{equation}
            \label{Nash delort}
             \|f\|^2_{L^2} \lesssim \frac{   \|\nabla f\|_{L^2} }{\sqrt[4]{1+\left| \log \|\nabla f\|_{L^2}  \right|}}.
        \end{equation}
    \end{itemize}
\end{proposition}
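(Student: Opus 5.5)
The plan is to start from the splitting \eqref{main splitting mollif}, insert $\|f\|_{L^1}\leq M$ and $\mathbb{M}_f(\ell)\leq \phi(\ell)$, and get
\begin{equation}
\|f\|^2_{L^2}\lesssim \ell^2 \|\nabla f\|^2_{L^2} + \frac{\phi(\ell)}{\ell^2}\qquad \forall \ell\in (0,1),
\end{equation}
with the implicit constant depending only on $M$ and $\phi$. Write $X:=\|\nabla f\|_{L^2}$. Throughout, I treat separately the regime $X\leq C_0$, with $C_0$ a universal constant chosen so that the choices of $\ell$ below satisfy $\ell<1$. In that regime both \eqref{Nash algebraic} and \eqref{Nash delort} follow from the Poincaré inequality $\|f\|_{L^2}\lesssim X$. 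This uses that the exponents $\frac{4-\alpha}{2-\alpha}\geq 2>1$, so $\|f\|_{L^2}^{\frac{4-\alpha}{2-\alpha}}\lesssim X^{\frac{4-\alpha}{2-\alpha}}\lesssim X$ for bounded $X$. In case (2), the right-hand side of \eqref{Nash delort} is comparable to $X$ when $X$ is bounded above, while $X^2\lesssim X$ there.

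For case (1), with $\phi(\ell)\lesssim \ell^\alpha$, I balance $\ell^2X^2$ against $\ell^{\alpha-2}$. This gives $\ell^{4-\alpha}=X^{-2}$, that is, $\ell=X^{-\frac{2}{4-\alpha}}$, which is below $1$ for $X>1$. Then
\begin{equation}
\|f\|^2_{L^2}\lesssim X^{2-\frac{4}{4-\alpha}}=X^{\frac{2(2-\alpha)}{4-\alpha}},
\end{equation}
and raising both sides to the power $\frac{4-\alpha}{2(2-\alpha)}$ gives \eqref{Nash algebraic}.

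For case (2), with $\phi(\ell)\lesssim|\log \ell|^{-1/2}$, exact balancing is implicit, so I pick a near-optimal explicit choice: the Nash-type scale with a logarithmic correction,
\begin{equation}
\ell^2=\frac{1}{X\,|\log X|^{1/4}},\qquad X\geq C_0 .
\end{equation}
With this choice the first term is $\ell^2X^2=X|\log X|^{-1/4}$. For the second term, $|\log\ell|=\tfrac12\log X+\tfrac18\log\log X\geq \tfrac12\log X$, so
\begin{equation}
\frac{\phi(\ell)}{\ell^2}\lesssim X|\log X|^{1/4}\,|\log X|^{-1/2}=X|\log X|^{-1/4}.
\end{equation}
Both terms therefore match \eqref{Nash delort}, since $1+|\log X|\sim|\log X|$ for $X\geq C_0$.

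I expect the main obstacle to be bookkeeping rather than a real difficulty. The choice of $\ell$ has to satisfy $\ell<1$, and $\log\log X$ must be well defined, which forces $C_0>e$. The constants also have to depend only on $M$, $\phi$ and $\alpha$. The small-$X$ regime, where $|\log X|$ could be small or $X$ could be less than $1$, is handled by the Poincaré inequality as above.
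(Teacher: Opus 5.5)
Your proposal is correct and follows essentially the paper's own proof: it uses the same splitting \eqref{main splitting mollif}, the same choices $\ell^{4-\alpha}=X^{-2}$ in case $(1)$ and $\ell^2=(X|\log X|^{1/4})^{-1}$ in case $(2)$ (the paper writes $(1+\log X)^{1/4}$, which makes no difference for $X>e$), and the same Poincar\'e fallback for bounded $X=\|\nabla f\|_{L^2}$. One small correction in that fallback: as $X\to 0$ the right-hand side of \eqref{Nash delort} is not comparable to $X$, so $X^2\lesssim X$ alone is not enough; the bound you actually need, $X^2\lesssim X(1+|\log X|)^{-1/4}$ on $(0,C_0]$, still holds because $X(1+|\log X|)^{1/4}$ is bounded there.
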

\begin{proof}
    We prove the two separately. We emphasize that all the implicit constants in the estimates below only depend on the total mass of $f$, which we are assuming it satisfies the universal bound $\|f\|_{L^1}\leq M$, and on the  constant that might be present in the assumption about the vanishing rate of $\phi$. As it will be clear from the proof, the dependence can be made explicit if needed.
    
    \underline{\textsc{Proof of (1)}}.  By \eqref{main splitting mollif} we deduce
    $$
    \|f\|^2_{L^2}\lesssim \ell^2 \|\nabla f\|^2_{L^2} +\ell^{\alpha -2}\qquad \forall 
    \ell \in (0,1).
    $$
    If $\|\nabla f\|_{L^2}> 1$, the optimal choice is 
    $$
    \ell^{4-\alpha}_{\rm opt} =\frac{1}{\|\nabla f\|^2_{L^2}},
    $$
    immediately yielding to \eqref{Nash algebraic}. When $\|\nabla f\|_{L^2}\leq 1$, the same conclusion follows by the Poincaré inequality
    $$
    \|f\|^{\frac{4-\alpha}{2-\alpha}}_{L^2}\lesssim  \|\nabla f\|^{\frac{4-\alpha}{2-\alpha}}_{L^2} \lesssim \|\nabla f\|_{L^2}
    $$
    since $\frac{4-\alpha}{2-\alpha}>1$.

     \underline{\textsc{Proof of (2)}}.  By \eqref{main splitting mollif} we deduce
    $$
    \|f\|^2_{L^2}\lesssim \ell^2 \|\nabla f\|^2_{L^2} +\frac{1}{\ell^2\sqrt{|\log \ell|}}\qquad \forall 
    \ell \in (0,1).
    $$
    Assume for the moment that $\|\nabla f\|_{L^2}>e$ and choose 
    $$
    \ell^2_{\rm opt}=\frac{1}{\|\nabla f\|_{L^2} \sqrt[4]{1+\log \|\nabla f\|_{L^2} }}.
    $$
    Since 
    $$
    |\log \ell_{\rm opt}|\gtrsim \log \|\nabla f\|_{L^2} \qquad \text{and} \qquad \frac{\sqrt[4]{1+\log \|\nabla f\|_{L^2}}}{\sqrt{\log \|\nabla f\|_{L^2}}}\lesssim \frac{1}{\sqrt[4]{1+ \log \|\nabla f\|_{L^2} }}
    $$
    for some implicit universal constants, such choice yields to 
    \begin{align}
         \|f\|^2_{L^2}&\lesssim \frac{\|\nabla f\|_{L^2}}{ \sqrt[4]{1+ \log \|\nabla f\|_{L^2} }} + \frac{\|\nabla f\|_{L^2} \sqrt[4]{1+ \log \|\nabla f\|_{L^2} }}{\sqrt{|\log \ell_{\rm opt}|}} \\
         & \lesssim \frac{\|\nabla f\|_{L^2}}{ \sqrt[4]{1+ \log \|\nabla f\|_{L^2}  }} + \frac{\|\nabla f\|_{L^2} \sqrt[4]{1+ \log \|\nabla f\|_{L^2}  }}{\sqrt{\log \|\nabla f\|_{L^2}}} \\
         &\lesssim  \frac{\|\nabla f\|_{L^2}}{ \sqrt[4]{1+ \log \|\nabla f\|_{L^2}  }}, 
    \end{align}
    that is the desired inequality. As already discussed several times, the case $\|\nabla f\|_{L^2}\leq e$ is simpler and it follows by the Poincaré inequality.
\end{proof}

\begin{remark}
    All the above considerations  apply to any dimension. If $d\geq 2$, the corresponding classical Nash inequality on $\T^d$ becomes 
    $$
    \|f\|^2_{L^2}\lesssim \|f\|^{\frac{4}{2+d}}_{L^1}\|\nabla f\|^{\frac{2d}{2+d}}_{L^2},
    $$
    which can be analogously improved under the assumption $\mathbb{M}_f(r)\rightarrow 0$ as $r\rightarrow 0$.
\end{remark}

Both the inequalities established in Proposition \ref{P:two nash ineq} are optimal. The examples saturating them are given by bounded sequences $\{f_n\}_n\subset L^1(\T^2)$ concentrating on a lower-dimensional set. For \eqref{Nash algebraic} the set will have Hausdorff dimension equal to $\alpha$, while for \eqref{Nash delort} we will need a set with ``logarithmically zero'' Hausdorff dimension. Modulo the specific choices of the parameters, the construction is the same in both cases, producing a self-similar Cantor set.
 
\begin{proposition}
    \label{P:nash algebraic sharp}
    Let $\alpha\in (0,2)$. There exists a sequence $\{f_n\}_n\subset W^{1,\infty}(\T^2)$ of zero-average functions, bounded in $L^1(\T^2)$, such that 
   $$
   \sup_{n\geq 1} \mathbb{M}_{f_n} (r)\lesssim r^\alpha\qquad \forall r\in (0,1)
   $$
 and satisfying  
    \begin{equation}\label{eq algebraic saturate}
    \lim_{n\rightarrow \infty} \|f_n\|_{L^2} = \infty \qquad \text{and}\qquad 0<\liminf_{n\rightarrow\infty}S_n\leq \limsup_{n\rightarrow\infty}S_n<\infty,
    \end{equation}
  where $S_n:=\frac{\|f_n\|_{L^2}^{\frac{4-\alpha}{2-\alpha}}}{\|\nabla f_n\|_{L^2}}$.
\end{proposition}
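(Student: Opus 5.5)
We construct $f_n$ as a normalized smoothed indicator of the $n$-th generation of a self-similar Cantor set of dimension $\alpha$. Fix an integer $N\geq 2$ and a scale factor $\lambda\in(0,1)$ with $N\lambda^\alpha=1$, i.e.\ $\lambda=N^{-1/\alpha}$. Since $\alpha<2$, we have $N\lambda^2<1$, so inside a square of side $1$ one can place $N$ disjoint squares of side $\lambda$ separated by gaps of order $\lambda$; after possibly enlarging $N$ so that $N\lambda^2$ is small, the gaps can be taken comparable to the sizes. Iterating $n$ times gives $N^n$ squares $Q_{n,i}$ of side $\ell_n:=\lambda^n$, with mutual distances $\gtrsim \lambda^{k}$ between squares that first separate at generation $k$. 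Let $\chi$ be a fixed Lipschitz bump supported in the unit square with $\int\chi=1$, and set
\begin{equation}
g_n(x):=\sum_{i=1}^{N^n} N^{-n}\ell_n^{-2}\,\chi\big((x-x_{n,i})/\ell_n\big),\qquad f_n:=g_n-\int_{\T^2} g_n .
\end{equation}
Then $f_n\in W^{1,\infty}$, it has zero average, and $\|f_n\|_{L^1}\leq 2$.

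\emph{Decay on balls.} For $r\in(0,1)$, pick $k$ with $\lambda^{k+1}<r\leq\lambda^k$. By the separation property, a ball of radius $r$ meets at most $C$ generation-$k$ squares, each carrying $g_n$-mass $N^{-k}$ when $k\leq n$. Hence the $g_n$-mass in the ball is at most $CN^{-k}=C\lambda^{k\alpha}\lesssim r^\alpha$. When $r<\ell_n$, the ball meets $O(1)$ bumps and the bound $\|g_n\|_\infty r^2\lesssim N^{-n}\ell_n^{-2}r^2=\ell_n^{\alpha-2}r^2\leq r^\alpha$ holds, using $\alpha<2$. The constant part contributes $\lesssim r^2\leq r^\alpha$. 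This gives $\sup_n\mathbb M_{f_n}(r)\lesssim r^\alpha$.

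\emph{Norms.} Since the supports of the bumps are disjoint,
\begin{equation}
\|g_n\|_{L^2}^2= N^n\cdot N^{-2n}\ell_n^{-4}\ell_n^2\|\chi\|_{L^2}^2\sim N^{-n}\ell_n^{-2}=\ell_n^{\alpha-2},
\end{equation}
and, in the same way, $\|\nabla f_n\|_{L^2}^2=\|\nabla g_n\|_{L^2}^2\sim N^{-n}\ell_n^{-4}=\ell_n^{\alpha-4}$. Subtracting the average changes $\|g_n\|_{L^2}$ by at most $1$, which is negligible because $\ell_n^{\alpha-2}\to\infty$. Thus $\|f_n\|_{L^2}\sim\ell_n^{(\alpha-2)/2}\to\infty$, and
\begin{equation}
\|f_n\|_{L^2}^{\frac{4-\alpha}{2-\alpha}}\sim \ell_n^{-\frac{4-\alpha}{2}}\sim\|\nabla f_n\|_{L^2}.
\end{equation}
The constants here depend only on $\chi$, $N$ and $\alpha$, which gives both the upper and lower bounds on $S_n$ in \eqref{eq algebraic saturate}. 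The upper bound on $\limsup S_n$ also follows directly from \eqref{Nash algebraic}.

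\emph{Main obstacle.} The delicate step is the geometric packing: it is what makes the ball-counting argument uniform across all scales. We must choose $N$ and the placement of the subsquares so that, at every generation, sibling squares are separated by a distance comparable to $\lambda$ times the parent side. This ensures that a ball of radius $\sim\lambda^k$ meets only boundedly many generation-$k$ squares. We would first try a regular grid arrangement: take $N=m^2$ subsquares placed on an $m\times m$ grid with spacing $1/m$, each of side $\lambda=N^{-1/\alpha}<1/m$. A stretch of this form is only possible when $\lambda<1/m$, which holds because $\alpha<2$. With this arrangement the bounded-overlap count is immediate, since each generation-$k$ square sits in a cell of side $\lambda^{k-1}/m\gtrsim\lambda^k$.
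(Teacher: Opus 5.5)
Your proposal is correct and is essentially the paper's proof: a self-similar Cantor set of dimension $\alpha$ carrying Lipschitz bumps of mass $\sim N^{-n}$, the two-case estimate on balls (radius below or above the finest scale), and the scaling computation $\|f_n\|_{L^2}^2\sim \ell_n^{\alpha-2}$, $\|\nabla f_n\|_{L^2}^2\sim \ell_n^{\alpha-4}$. The differences are cosmetic. The paper uses $4^n$ disks of radius $4^{-n/\alpha}$ with a plateau-plus-linear-annulus profile, whereas you use an $m\times m$ grid of squares with a bump supported inside each square. Your choice makes disjointness of the supports automatic for every $\alpha\in(0,2)$, and disjointness alone already gives your bounded-overlap count by comparing areas, so the packing step is less delicate than you suggest.
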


In the proofs below we will use the symbol $\sim$ to denote quantities with the same asymptotic behavior. More precisely, if $\{a_n\}_n$ and $\{b_n\}_n$ are two sequences of nonnegative real numbers, we write $a_n\sim b_n$ if 
$$
0<\liminf_{n\rightarrow \infty} \frac{a_n}{b_n}\leq \limsup_{n\rightarrow \infty} \frac{a_n}{b_n}<\infty.
$$

\begin{proof}
   Let $Q:=(0,1)^2\subset \R^2$. In order to define the sequence $\{f_n\}_n$, we need to construct a nested family of sets $\{C_n\}_n\subset Q$. For any $n\geq 1$, the set $C_n$ will be the $n$-th iteration of a self-similar Cantor set. More precisely, if $\delta_n\in (0,1)$ is a small parameter to be defined, the set $C_n$ consists in the union of $N_n=4^n$ pairwise disjoint disks of radius $\delta_n$ (see \cref{cantor fig}), i.e. 
   $$
   C_n:=\bigcup_{i=1}^{N_n} B_{\delta_n} (x_i).
   $$
   	\begin{figure}
		\centering
			\includegraphics[width=0.5\textwidth]{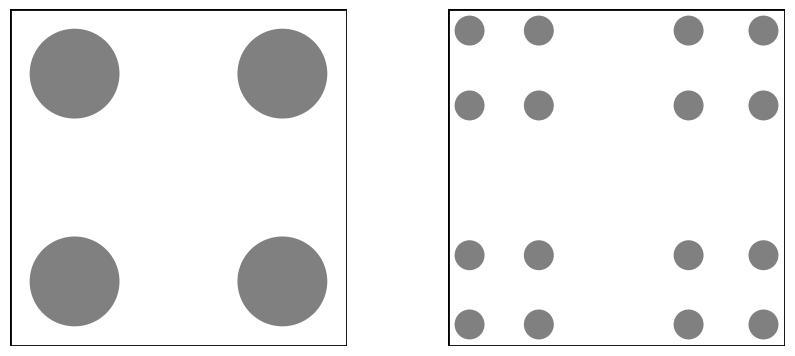} 
					\caption{Iterations $n=1$ (on the left) and $n=2$ (on the right) of the self-similar Cantor set.}\label{cantor fig}
	\end{figure}
   We define the nonnegative function $\tilde f_n\in W^{1,\infty}_c\left(Q\right)$ as
 \begin{equation}\label{def tilde fn}
 \tilde f_n(x) = \begin{cases}
\delta_n^{-2}N_n^{-1} & \text{if }\; x \in B_{\delta_n} (x_i)\\[4pt]
\delta_n^{-2}N_n^{-1} \Big( 1- \delta_n^{-1}\dist(x,\partial B_{\delta_n}(x_i))\Big) & \text{if }\; x \in B_{2\delta_n} (x_i)\setminus B_{\delta_n} (x_i)
\end{cases}
  \end{equation}
 for all $i=1,\dots,N_n$, so that $\spt \tilde f_n= \bigcup_{i=1}^{N_n} \overline B_{2\delta_n} (x_i)$. It follows that $\nabla \tilde f_n$ is supported in the union of all the shells and 
\begin{equation}
    \label{gradient pointwise estimate}
    |\nabla \tilde f_n (x)|=\delta_n^{-3}N_n^{-1} \qquad \forall x\in \bigcup_{i=1}^{N_n}B_{2\delta_n} (x_i)\setminus B_{\delta_n} (x_i).
    \end{equation}
Also
 $$
\int_Q \tilde f_n(x)\,dx=c_n\qquad \text{with} \qquad \{c_n\}_n\subset \ell^\infty(\N) \text{ bounded}.
 $$
 Moreover, being compactly supported in $Q$, the above procedure defines a periodic function. Therefore, setting $f_n:=\tilde f_n-c_n$ for all $n\geq 1$, we obtain a sequence of zero-average functions $\{f_n\}_n\subset W^{1,\infty}(\T^2)$ bounded in $L^1(\T^2)$. We now choose 
 $$
 \delta_n:= 4^{-\frac{n}{\alpha}}
 $$
 and check that $\{f_n\}_n$ satisfies all the required properties. From now on we will ignore universal multiplicative constants since they play no essential role. 

\underline{\textsc{Bound on balls}}: We want to show  $\sup_n \mathbb{M}_{f_n} (r)\lesssim r^\alpha$.

Since the presence of the additional constant $c_n$ would only add a lower order term, it is enough to prove the claim for $\tilde f_n$.

Assume that $r\leq \delta_n$. It is clear that in this case the largest mass of $\tilde f_n$ on $B_r(x)$ is achieved when $B_r(x)\subset B_{\delta_n}(x_i)$ for some $i=1,\dots,N_n$. Therefore 
$$
\mathbb{M}_{\tilde f_n} (r) \lesssim \delta_n^{-2}N_n^{-1} r^2.
$$
We want to check that the latter quantity is bounded by $r^\alpha$, uniformly in $n$. This is equivalent to check 
$$
r^{2-\alpha}\leq \delta_n^{2}N_n,
$$
which is clearly satisfied if $r\leq \delta_n$ since $\alpha\in (0,2)$.

Assume now $r>\delta_n$. Then, $r\in (\delta_{n-k+1},\delta_{n-k}]$ for some $k\in \{1,\dots,n\}$. Therefore, the worse bound on the mass of $\tilde f_n$ on $B_r(x)$ is when such ball touches $N_k$ among the disks $\{B_{\delta_n}(x_i)\}_i$. It follows that 
$$
\mathbb{M}_{\tilde f_n}(r) \lesssim N_k \delta_n^2 \delta_n^{-2}N_n^{-1} = N_{k-n}.
$$
The claim is proved if $N_{k-n}\lesssim r^\alpha$, which is equivalent to $r^{-\alpha}\lesssim N_{n-k}$. Since we are assuming $r>\delta_{n-k+1}$, we can bound 
$$
r^{-\alpha}< \delta_{n-k+1}^{-\alpha}\lesssim N_{n-k},
$$ 
concluding the proof of the claim.

\underline{\textsc{Saturating Nash}}: We want to show \eqref{eq algebraic saturate}.

We compute
$$
\|\tilde f_n\|^2_{L^2}\sim \left(\delta_n^{-2} N^{-1}_n\right)^2 \delta_n^2 N_n = 4^{\frac{2-\alpha}{\alpha}n}.
$$
Recalling that $f_n=\tilde f_n -c_n$ with $\{c_n\}_n$ bounded in $\ell^\infty(\N)$ and $\{\tilde f_n\}_n$  bounded in $L^1(\T^2)$, we obtain 
\begin{equation}
    \label{L2 norm}
    \| f_n\|^2_{L^2}= \|\tilde f_n\|^2_{L^2} +c_n^2 -2c_n \|\tilde f_n\|_{L^1}\sim  4^{\frac{2-\alpha}{\alpha}n}.
\end{equation}
By \eqref{gradient pointwise estimate} we also get 
\begin{equation}
    \label{H1 norm}
    \|\nabla f_n\|^2_{L^2}= \|\nabla \tilde f_n\|^2_{L^2} \sim \left(\delta_n^{-3}N_n^{-1}\right)^2\delta_n N_n\sim  4^{\frac{4-\alpha}{\alpha}n}.
\end{equation}
The claim \eqref{eq algebraic saturate} immediately follows by \eqref{L2 norm} and \eqref{H1 norm}.
 \end{proof}

\begin{proposition}
    \label{P:nash ineq sharp}
There exists a sequence $\{f_n\}_n\subset W^{1,\infty}(\T^2)$ of zero-average functions, bounded in $L^1(\T^2)$, such that 
$$
\sup_{n\geq 1} \mathbb{M}_{f_n} (r)\lesssim \frac{1}{\sqrt{|\log r|}} \qquad \forall r\in \left(0,\frac12\right)
$$
and satisfying
    \begin{equation}\label{eq delort saturate}
    \lim_{n\rightarrow \infty} \|f_n\|_{L^2} = \infty \qquad \text{and}\qquad 0<\liminf_{n\rightarrow\infty}S_n\leq \limsup_{n\rightarrow\infty}S_n<\infty,
    \end{equation}
    where $S_n:=\frac{\|f_n\|^2_{L^2}\sqrt[4]{\log \|\nabla f_n\|_{L^2}}}{\|\nabla f_n\|_{L^2}}$.
\end{proposition}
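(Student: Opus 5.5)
The plan is to reuse verbatim the self-similar Cantor construction from the proof of Proposition \ref{P:nash algebraic sharp}, changing only the radii. We keep $N_n=4^n$ disks at generation $n$ and the same profile $\tilde f_n$ from \eqref{def tilde fn}, with $f_n:=\tilde f_n-c_n$. The radii now shrink super-exponentially, with the choice
$$
\delta_n:=e^{-16^n},\qquad\text{so that}\qquad \sqrt{|\log\delta_n|}=4^n=N_n .
$$
Heuristically, the mass $N_m^{-1}$ carried by a generation-$m$ disk should be exactly $|\log(\text{its radius})|^{-1/2}$. This is the "logarithmically zero dimensional" analogue of $\delta_n=N_n^{-1/\alpha}$.

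First I fix the geometry. At generation $m$, each disk of radius $\sim\delta_m$ contains four sub-disks of radius $\delta_{m+1}$. Their centers are placed at mutual distance $\sim\delta_m$ and at distance $\gtrsim\delta_m$ from the parent's boundary. This is possible since $\delta_{m+1}\ll\delta_m/8$ for all $m\ge1$; if needed, a harmless initial rescaling or starting index takes care of the first generation. The support $\bigcup_i \overline B_{2\delta_n}(x_i)$ stays in $Q$. The facts $\|\tilde f_n\|_{L^1}\sim1$, $c_n$ bounded, zero average and periodicity follow exactly as before.

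Second, I prove the bound on balls for $\tilde f_n$; the constant $c_n$ contributes only $\lesssim r^2$, which is harmless. There are two regimes.
\begin{itemize}
\item If $r\le\delta_n$, then
$$
\mathbb M_{\tilde f_n}(r)\lesssim \delta_n^{-2}N_n^{-1}r^2 .
$$
The function $r^2\sqrt{|\log r|}$ is increasing on $(0,\tfrac12)$, so it suffices to check $r=\delta_n$. There the bound becomes $N_n^{-1}\sqrt{|\log\delta_n|}=1$.
\item If $r\in(\delta_{m+1},\delta_m]$ with $m<n$, the separation at scale $\delta_m$ implies that $B_r(x)$ meets at most $O(1)$ generation-$m$ disks. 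Each such disk carries mass $N_m^{-1}=4^{-m}$. On the other hand $|\log r|<16^{m+1}$, hence $|\log r|^{-1/2}>4^{-m-1}$, which gives the claim. For $r>\delta_1$ the bound is trivial from the $L^1$ bound.
\end{itemize}
This step is where I expect the main care to be needed. The algebraic proof compared $r$ with consecutive scales differing by a fixed factor. Here consecutive scales differ enormously, so the counting "a ball of radius $r\le \delta_m$ sees $O(1)$ generation-$m$ disks" must rely genuinely on the separation built into the construction. It cannot rely on a volume count: $r$ may be far larger than $\delta_{m+1}$, yet the ball still only sees the descendants of boundedly many generation-$m$ disks.

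Third, the saturation computation mirrors \eqref{L2 norm} and \eqref{H1 norm}. We have
$$
\|f_n\|_{L^2}^2\sim \delta_n^{-2}N_n^{-1}\to\infty,\qquad \|\nabla f_n\|_{L^2}^2\sim(\delta_n^{-3}N_n^{-1})^2\delta_n^2N_n=\delta_n^{-4}N_n^{-1} .
$$
The shells have area $\sim\delta_n^2$ each. Hence $\log\|\nabla f_n\|_{L^2}=2\cdot16^n-n\log 2+O(1)\sim 16^n$, so $\sqrt[4]{\log\|\nabla f_n\|_{L^2}}\sim 2^n$. Therefore
$$
S_n\sim\frac{\delta_n^{-2}4^{-n}\,2^n}{\delta_n^{-2}2^{-n}}\sim1,
$$
which is \eqref{eq delort saturate}.
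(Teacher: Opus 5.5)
Your proposal is correct and follows the paper's proof essentially verbatim. The paper uses the same Cantor sequence with $\delta_n=e^{-4^{2n}}=e^{-16^n}$ and the same two-regime bound on balls (monotonicity of $r^2\sqrt{|\log r|}$ for $r\le\delta_n$, and the count of descendant disks combined with $|\log \delta_{n-k+1}|^{-1/2}\gtrsim N_{k-n}$ otherwise), followed by the same computation of $\|f_n\|_{L^2}$ and $\|\nabla f_n\|_{L^2}$. Your explicit separation argument for the counting step simply makes precise what the paper asserts informally.
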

\begin{proof}
    The sequence $\{f_n\}_n$ is the same we have defined in the proof of Proposition  \ref{P:nash algebraic sharp}. However, in this case we need to choose 
    $$
    \delta_n:=e^{-4^{2n}}.
    $$
    We will again ignore universal multiplicative constants.

\underline{\textsc{Bound on balls}}: We want to show  $\sup_n \mathbb{M}_{f_n} (r)\lesssim |\log r|^{-\sfrac12}$.

As already done in the proof of Proposition  \ref{P:nash algebraic sharp}, it is enough to estimate $\tilde f_n$ only. If $r\leq \delta_n$, the largest mass of $\tilde f_n$ on $B_r(x)$ is achieved when $B_r(x)\subset B_{\delta_n}(x_i)$ for some $i=1,\dots,N_n$. Thus
$$
\mathbb{M}_{\tilde f_n} (r) \lesssim \delta_n^{-2} N_n^{-1}r^2.
$$
We want to check that $\delta_n^{-2}N_n^{-1}r^2\leq |\log r|^{-\sfrac12}$, which is equivalent to 
\begin{equation}
\label{condition to check}
r^2\sqrt{|\log r|}\leq \delta_n^2N_n.
\end{equation}
For sufficiently small values of $r$, the function $r^2\sqrt{|\log r|}$ is monotone increasing. Thus, from  $r\leq \delta_n$ we deduce 
$$
r^2\sqrt{|\log r|}\leq \delta_n^2\sqrt{|\log \delta_n|}= \delta_n^2\sqrt{4^{2n}}=\delta_n^2N_n,
$$
proving \eqref{condition to check}.

Assume now $r>\delta_n$. Then $r\in (\delta_{n-k+1},\delta_{n-k}]$ for some $k\in \{1,\dots, n\}$. In such case, the largest mass of $\tilde f_n$ that can be contained in a ball of radius $r$ is achieved when such ball touches $N_k$ disks among $\{B_{\delta_n}(x_i)\}_i$. Therefore 
$$
\mathbb{M}_{\tilde f_n} (r)\lesssim N_k\delta_n^2\delta_n^{-2} N_n^{-1} = N_{k-n}.
$$
On the other hand, from $r>\delta_{n-k+1}$, we also have 
$$
|\log r|^{-\frac12}>|\log \delta_{n-k+1}|^{-\frac12}\gtrsim N_{k-n}.
$$
The claim is proved.

\underline{\textsc{Saturating Nash}}: We want to show \eqref{eq delort saturate}.

As in the proof of Proposition  \ref{P:nash algebraic sharp} we have 
$$
\|f_n\|_{L^2}^2\sim \left(\delta_n^{-2}N_n^{-1}\right)^2\delta_n^2 N_n = \delta_n^{-2}N_n^{-1}
$$
and
$$
\|\nabla f_n\|_{L^2}^2\sim \left(\delta_n^{-3}N_n^{-1}\right)^2\delta_n^2 N_n = \delta_n^{-4}N_n^{-1}.
$$
Since $\delta_n=e^{-4^{2n}}$, these yield to 
$$
\|f_n\|^2_{L^2}\sim 4^{-n}e^{2\cdot 4^{2n}}\qquad \text{and}\qquad \|\nabla f_n\|_{L^2}\sim 4^{-\frac{n}{2}}e^{2\cdot 4^{2n}}.
$$
Therefore 
$$
\frac{\|f_n\|^2_{L^2}\sqrt[4]{\log \|\nabla f_n\|_{L^2}}}{\|\nabla f_n\|_{L^2}}\sim 4^{-\frac{n}{2}}\sqrt[4]{2 \cdot 4^{2n} -\frac{n}{2}\log 4}\sim \sqrt[4]{1-n 4^{-2n}},
$$
concluding the proof.
\end{proof}

\begin{remark}
    By \cite{lant23}*{Corollary 2.15}, the sequence $\{f_n\}_n$ constructed in Proposition  \ref{P:nash algebraic sharp} can be shown to be bounded in $H^{-1}(\T^2)$. On the other hand, we do not expect the same to be true for the one constructed in Proposition  \ref{P:nash ineq sharp}. That is because the  homogeneous negative Sobolev norm squared coming from a single $\tilde f_n\mathbbm{1}_{B_{2\delta_n}(x_i)}$ is of the order $N_n^{-2}|\log \delta_n|\sim 1$, for all $i=1,\dots,4^n$. By modifying the definition of $\tilde f_n$ from \eqref{def tilde fn} so that $\tilde f_n$ has zero average on $B_{2\delta_n} (x_i)$ for all $i$, the global negative Sobolev norm stays bounded. In fact, in doing so, one gets $\|\tilde f_n\|^2_{H^{-1}}\lesssim N_n^{-1}$.
\end{remark}

\section{Quantitative enstrophy bounds}\label{S:enstroph bounds}

In this section, we show how enstrophy bounds can be obtained from the knowledge of the absolute vorticity on balls $\mathbb{M}_\omega(r)$. We start by presenting the general strategy, and postpone the proof of Theorem \ref{T:main} later on.

We follow the strategy from \cites{CLLS16,LMP21,ELL25}, where the key idea is to get an effective Gr\"onwall-type estimate. All the computations below are justified since everything is smooth for  positive times. By \eqref{NS-Vort} we have 
\begin{equation}
    \label{enstrophy balance} 
    \frac{d}{dt}\Vert \omega^{\nu}(t)\Vert_{L^2}^2 = - 2 \nu \Vert \nabla \omega^{\nu}(t)\Vert_{L^2}^2\qquad \forall \nu, t>0.
\end{equation}
We now assume that 
$$
\Psi \left(\|\omega^{\nu}(t)\Vert_{L^2}^2\right)\leq \|\nabla \omega^{\nu}(t)\Vert_{L^2}^2
$$
for a monotone increasing superquadratic function $\Psi\in C^1$. Since $\|\omega^\nu(t)\|_{L^1}\leq \|\omega^\nu_0\|_{\mathcal M}$ and $\{\omega^\nu_0\}_\nu\subset \mathcal M(\T^2)$ is bounded, Proposition \ref{P: nash superquadratic ELL} makes the above assumption legitimate as soon as $\mathbb{M}_\omega(r)\rightarrow 0$ as $r\rightarrow 0$. Therefore
\begin{equation}
    \label{enstrophy inequality} 
    \frac{d}{dt}\Vert \omega^{\nu}(t)\Vert_{L^2}^2 \leq  - 2 \nu \Psi \left(\|\omega^{\nu}(t)\Vert_{L^2}^2\right) \qquad \forall \nu, t>0.
\end{equation}
For convenience we set 
\begin{equation}
    \label{z rescaing time}
    z(t):= \|\omega^{\nu}(\nu^{-1}t)\|_{L^2}^2,
\end{equation}
which then satisfies\footnote{With no loss of generality we are incorporating the constant $2$ in \eqref{enstrophy inequality} inside the superquadratic function $\Psi$.} $z'(t)\leq -\Psi (z(t))$ for all $t>0$. Let 
\begin{equation}\label{F def}
F(w):=\int_w^\infty \frac{1}{\Psi (v)}\,dv \qquad \forall w>0.
\end{equation}
Note that $F(w)<\infty$ for all $w>0$ since $\Psi$ is superquadratic. We now claim that 
\begin{equation}
    \label{claim comparison principle with F}
    F(z(t))\geq t \qquad \forall t>0.
\end{equation}
 The claim follows by a comparison principle. Since the differential inequality might be singular at $t=0$\footnote{This happens when $z(t)\rightarrow \infty$ as $t\rightarrow 0$, which is the typical case for measure initial vorticities.}, we provide the details. 

Let $\delta>0$. Since everything is smooth in $(\delta,\infty)$, by the comparison principle we get that the function $z$ stays below the solution $y_\delta:[\delta,\infty)\rightarrow (0,\infty)$ to 
\begin{equation}
    \label{ode for y delta}
   \begin{cases}
y'_\delta(t) = -\Psi (y_\delta(t))\\[4pt]
y_\delta(\delta)=z(\delta).
\end{cases}
\end{equation}
A solution to \eqref{ode for y delta} is given, implicitly, by 
$$
\int_{y_\delta(t)}^{z(\delta)} \frac{1}{\Psi (v)}\,dv=t-\delta\qquad \forall t\geq \delta.
$$
Then, since $z(t)\leq y_\delta (t)$ for all $t\geq \delta$, by letting 
$$
F_\delta(w):=\int_{w}^{z(\delta)} \frac{1}{\Psi (v)}\,dv,
$$
we get 
\begin{equation}
    \label{inequality with Fdelta monotone}
    t-\delta=F_\delta(y_\delta (t))\leq F_\delta(z(t))\qquad \forall t\geq \delta.
\end{equation}
Note that $z$ is monotone decreasing by \eqref{enstrophy balance}, and thus it admits a, possibly infinite, limit $L:=\lim_{\delta\rightarrow 0} z(\delta)$. Therefore, we can let $\delta\rightarrow 0$ in \eqref{inequality with Fdelta monotone} to obtain 
$$
t\leq \int_{z(t)}^L \frac{1}{\Psi(v)}\,dv \leq \int_{z(t)}^\infty \frac{1}{\Psi(v)}\,dv = F(z(t))\qquad \forall t>0.
$$
The claim \eqref{claim comparison principle with F} is thus proved. Recalling \eqref{z rescaing time}, we have achieved 
\begin{equation}
    \label{main general enstrophy bound}
    F\left(\|\omega^\nu(t)\|^2_{L^2}\right)\geq \nu t\qquad \forall \nu, t>0.
\end{equation}
We are now ready to prove Theorem \ref{T:main}.

\begin{proof}[Proof of Theorem \ref{T:main}]
    We prove the two claims separately. 

    \underline{\textsc{Proof of $(a)$}}. By Proposition \ref{P:two nash ineq} (1) we can choose $\Psi (v) = C v^\frac{4-\alpha}{2-\alpha}$ for some constant $C>0$. Thus, recalling the definition of $F$ from \eqref{F def}, by \eqref{main general enstrophy bound} we deduce 
    $$
    \frac{2}{2-\alpha} \frac{1}{\|\omega^\nu(t)\|^{\frac{4}{2-\alpha}}_{L^2}} = \int_{\|\omega^\nu(t)\|^2_{L^2}}^\infty \frac{1}{v^\frac{4-\alpha}{2-\alpha}}\,dv \gtrsim \nu t \qquad \forall \nu,t>0.
    $$
    Equivalently $\|\omega^\nu(t)\|^2_{L^2}\lesssim (\nu t)^\frac{\alpha-2}{2}$, from which we conclude 
    $$
    \nu\int_0^T \|\omega^\nu(\tau)\|^2_{L^2}\,d\tau\lesssim (\nu T)^\frac{\alpha}{2}\qquad \forall \nu,T>0.
    $$

\underline{\textsc{Proof of $(b)$}}. We want to prove that \eqref{main general enstrophy bound} implies 
\begin{equation}
    \label{enstroph bound for delort}
     \|\omega^\nu (t)\|^2_{L^2}\lesssim \frac{1}{\nu t \sqrt{|\log (\nu t)|}}  \qquad   \text{if }  \nu t<1.
\end{equation}
   For convenience, we reparametrize time by setting $s=\nu t$. Then, by \eqref{main general enstrophy bound} and the monotonicity of $F$, \eqref{enstroph bound for delort}  holds if 
    \begin{equation}
        \label{reformulating the bound for delort}
        \int^\infty_{\frac{1}{s\sqrt{|\log s|}}} \frac{1}{\Psi(v)}\,dv = F\left(\frac{1}{s\sqrt{|\log s|}}\right)\lesssim s\qquad \forall s\in (0,1).
    \end{equation}
Note that \eqref{reformulating the bound for delort} holds true for $s\rightarrow 1$ since, in such case, the left-hand-side will vanish while the right-hand-side goes to $1$. We can thus restrict to $s\in (0,s_1)$ for some $s_1<1$. Let $s_0 \in (0,s_1)$. For $s\in (s_0,s_1)$ the claim above is obvious for a sufficiently large constant $C=C(s_0,s_1)>0$ since 
$$
  \int^\infty_{\frac{1}{s\sqrt{|\log s|}}} \frac{1}{\Psi(v)}\,dv\leq  \int^\infty_{K} \frac{1}{\Psi(v)}\,dv  \leq \frac{s}{s_0} \int_{K}^\infty  \frac{1}{\Psi(v)}\,dv =: C s\qquad \forall s\in (s_0,s_1),
$$
where 
$$
K:=\frac{1}{\max \left(s_0\sqrt{|\log s_0|},s_1\sqrt{|\log s_1|}\right)}.
$$
Therefore, to conclude \eqref{reformulating the bound for delort} it is enough to show 
\begin{equation}\label{right derivative bounded}
\limsup_{s\rightarrow 0}\frac{1}{s}  \int^\infty_{\frac{1}{s\sqrt{\log  \frac{1}{s}}}} \frac{1}{\Psi(v)}\,dv<\infty.
\end{equation}
Note that \eqref{right derivative bounded} is nothing but the boundedness of the right derivative at $s=0$ of the map 
$$
s\mapsto  \int^\infty_{\frac{1}{s\sqrt{\log \frac{1}{s}}}} \frac{1}{\Psi(v)}\,dv.
$$
Therefore, \eqref{right derivative bounded} is satisfied if 
$$
\frac{d}{ds} \int^\infty_{\frac{1}{s\sqrt{\log \frac{1}{s}}}} \frac{1}{\Psi(v)}\,dv =\frac{1}{\Psi\left(\frac{1}{s\sqrt{\log \frac{1}{s}}}\right)}\left(\sqrt{\log \frac{1}{s}} - \frac{1}{2\sqrt{\log \frac{1}{s}}}\right)\frac{1}{s^2 \log \frac{1}{s}}
$$
stays bounded in a right neighborhood of $s=0$. This is true if
\begin{equation}
    \label{cleaner reformulation}
    \frac{1}{s^2\sqrt{\log \frac{1}{s}}}\lesssim \Psi\left(\frac{1}{s\sqrt{\log \frac{1}{s}}}\right)\qquad \forall\, 0<s\ll 1.
\end{equation}
By Proposition \ref{P:two nash ineq} (2) we can choose $\Psi=\left(\gamma^{-1}\right)^2$ for an increasing function $\gamma$ such that $\gamma(z) \sim \frac{z}{\sqrt[4]{\log z}}$ for $z\gg 1$. It is immediate to check that \eqref{cleaner reformulation} is satisfied with such a choice. This concludes the proof of \eqref{reformulating the bound for delort}, and thus of \eqref{enstroph bound for delort} as well.

 We conclude, for $\nu T<1$ and $T>\delta>0$, that 
 \begin{align}
     \nu \int_\delta^T \|\omega^\nu(\tau)\|^2_{L^2}\,d\tau &\lesssim \int_\delta^T \frac{1}{\tau \sqrt{|\log (\nu \tau)|}}\,d\tau \\
     &= \int_{\log \frac{1}{\nu T}}^{\log \frac{1}{\nu \delta}} \frac{1}{\sqrt{\tau}}\,d\tau \\
     &=2\left(\sqrt{{\log \frac{1}{\nu \delta}}} - \sqrt{{\log \frac{1}{\nu T}}}\right) \\
     &=2\frac{\log \frac{1}{\nu \delta} - \log \frac{1}{\nu T}}{\sqrt{{\log \frac{1}{\nu \delta}}} + \sqrt{{\log \frac{1}{\nu T}}}} \\
     &\lesssim \frac{\log \frac{T}{\delta}}{\sqrt{|\log (\nu T)|}}, 
 \end{align}
 with an implicit constant that does not depend on any of the parameters $\nu,\delta,T$.
\end{proof}

Corollary  \ref{C:time scale} follows as an almost direct consequence of Theorem \ref{T:main}.

\begin{proof}[Proof of Corollary  \ref{C:time scale}]
The claim $(a')$ is obvious from Theorem \ref{T:main} $(a)$. We are left to prove $(b')$. Let $\eps>0$. Thanks to the strong compactness of the initial velocities, by \cite{DRP25}*{Proposition 3.1} we find  $\delta>0$ such that 
\begin{equation}
    \label{no dissipation for short times}
    \limsup_{\nu\rightarrow 0} \nu\int_0^\delta \|\omega^\nu(\tau)\|^2_{L^2}\,d\tau<\eps.
\end{equation}
Let $\kappa \in (0,\frac12)$ be fixed and $\{T_\nu\}_\nu$ a sequence of positive numbers such that 
\begin{equation}
    \label{time scale restriction for delort}
    \lim_{\nu\rightarrow 0} \frac{T_\nu}{e^{|\log \nu|^\kappa}}=0.
\end{equation}
In particular, it must be $\nu T_\nu<1$ for all $\nu>0$ sufficiently small. We can thus apply Theorem \ref{T:main} $(b)$ to get 
$$
 \nu \int_\delta^{T_\nu} \|\omega^\nu(\tau)\|^2_{L^2}\,d\tau\lesssim \frac{\log \frac{T_\nu}{\delta}}{\sqrt{|\log \left(\nu T_\nu\right)|}}.
$$
If  $\{T_\nu\}_\nu$ is bounded, we can trivially conclude. Thus assume $T_\nu\rightarrow \infty$ as $\nu\rightarrow 0$. In this case, for all sufficiently small\footnote{Depending on $\delta>0$ as well.} $\nu>0$, we have 
\begin{equation}\label{ugly bound}
 \nu \int_\delta^{T_\nu} \|\omega^\nu(\tau)\|^2_{L^2}\,d\tau\lesssim \frac{\log T_\nu}{\sqrt{|\log (\nu T_\nu)|}}=\frac{1}{\sqrt{\left| \frac{\log \nu}{(\log T_\nu)^2}+\frac{1}{\log T_\nu}\right|}}.
\end{equation}
Note that $\left(\log T_\nu\right)^{-1}\rightarrow 0$ as $\nu\rightarrow 0$. Moreover, the condition  \eqref{time scale restriction for delort} implies $e^{|\log \nu|^\kappa}>T_\nu$ for all $\nu\ll 1$. Equivalently, it must hold $|\log \nu|^{2\kappa}>(\log T_\nu)^2$, from which 
$$
\frac{|\log \nu|}{(\log T_\nu)^2}>|\log \nu|^{1-2\kappa}\rightarrow \infty\qquad \text{as } \nu\rightarrow 0,
$$
since $\kappa <\frac12$. These considerations prove that the right-hand-side in \eqref{ugly bound} vanishes, that is
$$
  \limsup_{\nu\rightarrow 0} \nu \int_\delta^{T_\nu} \|\omega^\nu(\tau)\|^2_{L^2}\,d\tau =0.
$$
Together with \eqref{no dissipation for short times}, this yields to 
$$
 \limsup_{\nu\rightarrow 0} \nu \int_0^{T_\nu} \|\omega^\nu(\tau)\|^2_{L^2}\,d\tau<\eps.
$$
The proof is concluded since $\eps>0$ was arbitrary.
\end{proof}

\begin{remark}
    \label{R:infinite energy}
    As it should be clear from the computation above, all the bounds depend only on $\sup_{\nu>0}\|\omega^\nu_0\|_{\mathcal M}$ and $\mathbb{M}_\omega$. They are, in particular, independent of $\|u^\nu_0\|_{L^2}$, which is then allowed to grow indefinitely as $\nu\rightarrow 0$. The only reason why we are still requiring $\{u^\nu_0\}_\nu\subset L^2(\T^2)$ in Theorem \ref{T:main} and Corollary  \ref{C:time scale} is to make sure that the Navier--Stokes equations are well-posed. On the whole space $\R^2$, the well-posedness is known for any measure initial vorticity \cite{GG05}, thus allowing initial data with infinite kinetic energy. We expect that, with suitable modifications, the methods from \cite{GG05} should extend to the spatially periodic setting as well, allowing to remove the assumption $\{u^\nu_0\}_\nu\subset L^2(\T^2)$ from Theorem \ref{T:main} and Corollary  \ref{C:time scale}. In fact, being or not in a well-posedness setting is not really essential to our purposes as the arguments apply to any, a priori given, solution to \eqref{NS-Vort} as soon as $\sup_{\nu>0}\|\omega^\nu_0\|_{\mathcal M}<\infty$.
\end{remark}

\begin{remark}
    \label{R:general AD eq}
    In none of the above proofs we have used the specific relation between $\omega^\nu$ and $u^\nu$. In particular, all the results still hold for any scalar, active or not, solving
    $$
    \partial_t\theta^\nu +u^\nu\cdot \nabla \theta^\nu =\nu \Delta \theta^\nu,
    $$
    as soon as $\div u^\nu =0$, $\sup_{\nu>0}\|\theta^\nu_0\|_{\mathcal M}<\infty$ and $\mathbb{M}_\theta(r)\rightarrow 0$ as $r\rightarrow 0$.
\end{remark}

We conclude this section by proving Proposition \ref{P:saturate algebraic rate}, that is by constructing an explicit example that saturates the rate proved in Theorem \ref{T:main} (a) for $\alpha=1$.

\begin{proof}[Proof of Proposition \ref{P:saturate algebraic rate}]
Let 
$$
v_0(x):= \frac{x^\perp}{|x|^2} \mathbbm{1}_{B^c_1(0)}(x)\qquad\forall x\in \R^2.
$$
Direct computations show $\div v_0 =0$ and $\curl v_0 = \mathcal H^1\llcorner \mathbb{S}^1$ in $\mathcal D'(\R^2)$. Let $\chi \in C^\infty_c([0,2))$ be a cutoff function, constantly equal to $1$ on $[0,1]$. Set $\tilde \chi (x):=\chi (|x|)$ for all $x\in \R^2$. Consequently, we define $u_0:=\tilde \chi v_0$. Since 
$$
v_0(x)\cdot \nabla \tilde \chi (x) = v_0(x)\cdot \frac{x}{|x|}\chi'(|x|) = 0 \qquad \forall x\in\R^2,
$$
by $\div v_0=0$ we deduce $\div u_0=0$ as well. Moreover, $u_0$ is bounded and compactly supported in $B_2(0)$. Its $\curl$ is given by 
$$
\omega_0 :=\curl u_0 =-\div u_0^\perp = \frac{\chi'(|x|)}{|x|} + \mathcal H^1\llcorner \mathbb{S}^1\qquad \text{in } \mathcal D'(\R^2).
$$
For any $\nu>0$ solve 
$$
  \begin{cases}
\partial_t \omega^\nu =\nu\Delta \omega^\nu \\[4pt]
\omega^\nu(\cdot, 0)=\omega_0
\end{cases}
$$
on $\R^2\times [0,\infty)$. Since $\omega_0$ is radially symmetric,  $\{\omega^\nu\}_\nu$ solve \eqref{NS-Vort} as well.

\underline{\textsc{Proof of \eqref{1d rate saturation}}}. Let 
$$
\omega_{0,1}(x) := \frac{\chi'(|x|)}{|x|}\qquad \text{and}\qquad \omega_{0,2} = \mathcal H^1\llcorner \mathbb{S}^1.
$$
By linearity it must be $\omega^\nu = \omega^\nu_1+\omega^\nu_2$, where 
\begin{equation}\label{linear split}
 \begin{cases}
\partial_t \omega_1^\nu =\nu\Delta \omega_1^\nu \\[4pt]
\omega_1^\nu(\cdot, 0)=\omega_{0,1}
\end{cases}
\qquad \text{and}\qquad  \begin{cases}
\partial_t \omega_2^\nu =\nu\Delta \omega_2^\nu \\[4pt]
\omega_2^\nu(\cdot, 0)=\omega_{0,2}.
\end{cases}
\end{equation}
Since $\omega_{0,1}\in L^2(\R^2)$, we have 
\begin{equation}
    \label{bound on smooth part}
    \|\omega^\nu_1(t)\|_{L^2}\leq  \|\omega_{0,1}\|_{L^2}\qquad \forall \nu,t>0.
\end{equation}
We claim that 
\begin{equation}
    \label{claim bound on singular part}
    \nu\int_0^1 \|\omega^\nu_2(\tau)\|^2_{L^2}\,d\tau\gtrsim \sqrt\nu\qquad \forall \nu\ll 1.
\end{equation}
Given the claim, the lower bound \eqref{1d rate saturation}  follows. Indeed, by the Young's inequality\footnote{More precisely, we are using $2ab\geq -4a^2-\frac{b^2}{4}$ for any  $a,b\in\R$.} and \eqref{bound on smooth part} we deduce 
\begin{align}
\nu\int_0^1 \|\omega^\nu(\tau)\|^2_{L^2}\,d\tau&= \nu\int_0^1 \|\omega^\nu_1(\tau)\|^2_{L^2}\,d\tau + \nu\int_0^1 \|\omega^\nu_2(\tau)\|^2_{L^2}\,d\tau  \\
&\quad +2\nu \int_0^1 \big(\omega^\nu_1(\tau),\omega^\nu_2(\tau)\big)\,d\tau \\
&\geq \frac{3}{4}\nu \int_0^1 \|\omega^\nu_2(\tau)\|^2_{L^2}\,d\tau - 3\nu \int_0^1 \|\omega^\nu_1(\tau)\|^2_{L^2}\,d\tau \\
&\gtrsim \sqrt{\nu}-\nu, 
\end{align}
from which we get \eqref{1d rate saturation}. 

We want to prove \eqref{claim bound on singular part}. In the Fourier variable, the solution $\omega_2^\nu$ takes the explicit form 
$$
\hat \omega_2^\nu(\xi,t)=\hat \omega_{0,2}(\xi) e^{-\nu t|\xi|^2}\qquad \forall \xi\in \R^2,\, t>0.
$$
Then, by the Plancherel identity 
$$
\|\omega_2^\nu(t)\|^2_{L^2} = \int_{\R^2}|\hat \omega_{0,2}(\xi)|^2 e^{-2\nu t|\xi|^2}\,d\xi.
$$
We will prove the stronger lower bound 
\begin{equation}
    \label{stronger lower bound}
    \int_{\R^2}|\hat \omega_{0,2}(\xi)|^2 e^{-2\nu t|\xi|^2}\,d\xi\gtrsim 1+\frac{1}{\sqrt{\nu t}}\qquad \text{if } \nu t<1,
\end{equation}
which immediately implies \eqref{claim bound on singular part} after integration in time. Since $\omega_{0,2}$ is a finite measure, we have $
\hat \omega_{0,2}\in L^\infty(\R^2)$. Moreover, $\hat \omega_{0,2}$ is radial and can be written as $\hat \omega_{0,2}(\xi) = 2\pi J_0(|\xi|)$ where $J_0$ is the Bessel function of the first kind of order $0$. It is well-known that 
$$
J_0(|\xi|)\sim\frac{\cos \left(|\xi|-\frac{\pi}{4}\right)}{\sqrt{|\xi|}}\qquad \forall |\xi|>1.
$$
Also, since $\nu t<1$, we have 
$$
\int_{B_1(0)}|\hat \omega_{0,2}(\xi)|^2 e^{-2\nu t|\xi|^2}\,d\xi\geq \frac{1}{e^2}\int_{B_1(0)}|\hat \omega_{0,2}(\xi)|^2 \,d\xi.
$$
Therefore 
\begin{align}
     \int_{\R^2}|\hat \omega_{0,2}(\xi)|^2 e^{-2\nu t|\xi|^2}\,d\xi&\geq  \frac{1}{e^2} \int_{B_1(0)}|\hat \omega_{0,2}(\xi)|^2 \,d\xi+  \int_{B_1^c(0)}|\hat \omega_{0,2}(\xi)|^2 e^{-2\nu t|\xi|^2}\,d\xi \\
     &\gtrsim 1 +  \int_{B_1^c(0)}\frac{\cos^2\left(|\xi|-\frac{\pi}{4}\right)}{|\xi|} e^{-2\nu t|\xi|^2}\,d\xi \\
     &\gtrsim 1+\frac{1}{\sqrt{\nu t}} \int_{\sqrt{\nu t}}^\infty \cos^2\left(\frac{r}{\sqrt{\nu t}}-\frac{\pi}{4}\right) e^{-2r^2}\,dr.\label{bound for RL}
\end{align}
By the formula $2\cos^2 r = 1+\cos (2r)$ we can rewrite 
$$
\cos^2\left(\frac{r}{\sqrt{\nu t}}-\frac{\pi}{4}\right)=\frac{1+\cos \left(\frac{2r}{\sqrt{\nu t}}-\frac{\pi}{2}\right)}{2}= \frac{1+\sin \left(\frac{2r}{\sqrt{\nu t}}\right)}{2}.
$$
Since by the Riemann--Lebesgue lemma
$$
\lim_{n\rightarrow \infty} \int_{\frac{1}{n}}^\infty \cos^2 \left(nr-\frac{\pi}{4}\right) e^{-2r^2}\,dr = \frac{1}{2} \int_0^\infty e^{-2r^2}\,dr,
$$
by using it into \eqref{bound for RL} we conclude \eqref{stronger lower bound}.

\underline{\textsc{Proof of \eqref{1d decay on balls}}}. Let $x\in \R^2$ be arbitrarily fixed. Recall the splitting $\omega^\nu = \omega^\nu_1+\omega^\nu_2$ from \eqref{linear split}. Note that $\omega_2^\nu$ stays positive in the evolution. Therefore, if $\varphi_r\in C^\infty_c(B_{4r}(x))$ is a nonnegative cutoff such that 
$$
\varphi\big|_{B_r(x)}\equiv 1 \qquad \text{and}\qquad  |\nabla \varphi_r|\leq r^{-1},
$$
by  \eqref{bound on smooth part} we deduce
\begin{align}
\int_{B_r(x)}|\omega^\nu(y,t)|\,dy&\leq  \int_{B_r(x)}|\omega_1^\nu(y,t)|\,dy + \int_{B_r(x)}|\omega^\nu_2(y,t)|\,dy \\
&\lesssim r + \int_{B_{4r}(x)} \omega^\nu_2(y,t)\varphi_r(y)\,dy.\label{almost done}
\end{align}
Since $\omega_{0,2} = \curl v_0$, it must be $\omega^\nu_2 = \curl v^\nu$ with 
$$
\begin{cases}
\partial_t v^\nu =\nu\Delta v^\nu \\[4pt]
v^\nu(\cdot, 0)=v_{0}.
\end{cases}
$$
Moreover, by the maximum principle $\|v^\nu(t)\|_{L^\infty}\leq \|v_0\|_{L^\infty}$ for all $\nu, t>0$. Therefore, we can integrate by parts the last term in the right-hand-side of \eqref{almost done} to conclude 
\begin{align}
   \int_{B_r(x)}|\omega^\nu(y,t)|\,dy&\lesssim r - \int_{B_{4r}(x)} v^\nu(y,t)\cdot \nabla^\perp \varphi_r(y)\,dy \\
   &\lesssim r + r^2 \|\nabla \varphi_r\|_{L^\infty}  \\
   &\lesssim r, 
\end{align}
for an implicit constant that does not depend on $x,r,\nu$ and $t$.
\end{proof}

\begin{remark}
    In the previous proof, the sharpness of the vanishing rate of the dissipation has been obtained as a consequence of the fact that  the Fourier transform of the one-dimensional Lebesgue measure on $\mathbb{S}^1$, which we have denoted by $\hat \omega_{0,2}$, decays, modulo oscillations, exactly like $|\hat \omega_{0,2}(\xi)|^2\sim |\xi|^{-1}$ for all $|\xi|>1$.
    At least for any $\alpha\in (0,\frac12)$, the existence of measures $\mu$ such that $\sup_{x}\mu(B_r(x))\lesssim r^\alpha$ and whose quadratic Fourier spherical average  decays exactly like
    $$
    \int_{\mathbb{S}^1} |\hat \mu(R\xi)|^2 \, d\mathcal{H}^1(\xi)\sim |R|^{-\alpha}\qquad \forall R>1
    $$
  is due to Mattila \cite{Mattila87}. See also \cites{LR19,Wolff99} and references therein for the case $\alpha>\frac12$. By essentially repeating the same computations we have done in the proof of Proposition \ref{P:saturate algebraic rate}, these can be used to prove that the heat equation, with such a measure as an initial datum, saturates the corresponding vanishing rate of the dissipation. Being outside of the radial symmetry, this does not seem to trivially translate into a statement for the Navier--Stokes equations.
\end{remark}

\section{Failed sharp attempts in the Delort's class}\label{S:failed tries}
A natural question is whether the results obtained in this work, more specifically the rates in Theorem \ref{T:main}, are sharp. As shown in the previous section, Proposition \ref{P:saturate algebraic rate} provides an affirmative answer for Theorem \ref{T:main} (a) when $\alpha=1$ by constructing an explicit example saturating the rate. We expect the result to extend to any $\alpha \in (0,2)$, thus covering the entire case (a). For Theorem \ref{T:main} (b), instead, sharpness can only be conjectured at this stage. Our main interest in establishing the sharpness of Theorem \ref{T:main} (b) is towards Conjecture \ref{Conj}. In this section we present a couple of attempts which turned out to be unsuccessful. Nonetheless, we believe it is worth discussing them. 

 The original goal of these attempts was to find a sequence of initial velocities $\{u_0^{\nu}\}_{\nu}$, possibly bounded\footnote{Although that is not required in Theorem \ref{T:main}, it is desirable to keep the kinetic energy bounded if aiming at Conjecture \ref{Conj}.} in $L^2(\mathbb{T}^2)$, such that the sequence of initial vorticities $\{\omega_0^{\nu}\}_{\nu} \subset \mathcal{M}(\mathbb{T}^2)$ is bounded and the corresponding solutions $\{\omega^{\nu}\}_{\nu}$ to the Navier--Stokes equations \eqref{NS-Vort} satisfy 
 \begin{equation}
     \label{attempt M log decay}
     \mathbb{M}_\omega(r) \lesssim \frac{1}{\sqrt{|\log r|}} \qquad \forall r\in \left(0,\frac12\right)
 \end{equation}
 and
  \begin{equation}
     \label{attempt dissipation sharp}
   \nu \int_\delta^1 \|\omega^\nu (\tau)\|^2_{L^2}\,d\tau \gtrsim \frac{1}{\sqrt{|\log \nu|}} \qquad \forall 0 < \nu \ll 1,
 \end{equation}
 for some $\delta\in (0,1)$. Note that, in view of the proof of the improved Nash inequality given in \cref{S:nash ineq}, to hope for \eqref{attempt dissipation sharp} it is necessary that the logarithmic decay \eqref{attempt M log decay} is optimal. Although it is quite easy to produce solutions saturating \eqref{attempt M log decay}, it turns out that such examples have a dissipation $\lesssim |\log \nu|^{-1}$, ruling out the possibility to achieve \eqref{attempt dissipation sharp}. In \cref{S:conclusions} we will discuss why we believe this happens.

 We  restrict to consider radially symmetric vorticities since, in this case, $u^\nu\cdot \nabla \omega^\nu\equiv 0$ and  \eqref{NS-Vort} reduces to the heat equation which is much simpler to handle. For simplicity, we work on the whole space $\mathbb{R}^2$. Therefore, in the definition of the absolute vorticity on balls $\mathbb{M}_{\omega}(r)$ the supremum is taken over all $x \in \mathbb{R}^2$. The first example is with a sequence of initial data obtained by rescaling a smooth and compactly supported vorticity profile, while, in the second case, we consider a fixed integrable initial vorticity with the sharp logarithmic decay on balls.

\subsection{Construction by rescaling}\label{S:first ex} The first attempt is based on a spatial rescaling inspired by \cite{majda1993remarks}. In particular, the example we construct in this section proves Proposition \ref{P:fake delort sharp}. 

Let $\mu_0\in C^\infty_c(B_1(0))$ be  nontrivial, nonnegative and radially symmetric. The vector field
\begin{equation}
    \label{vort veloc radial formula}
v_0(x):=\frac{x^\perp}{|x|^2}\int_0^{|x|} r \mu_0(r)\,dr
\end{equation}
satisfies $\div v_0=0$ and $\curl v_0 = \mu_0$ in $\mathcal D'(\R^2)$. Note that, although $v_0\in C^\infty(\R^2)$, $v_0\not \in L^2(\R^2)$ since $|v_0(x)|\sim |x|^{-1}$ for $|x|>1$. Solve 
$$
\left\{\begin{array}{ll}
\partial_t \mu = \Delta \mu \\
\mu(\cdot, 0)=\mu_0
\end{array}\right.
$$
on $\R^2\times [0,\infty)$. Consequently, for any $\nu>0$, we set 
\begin{equation}
    \label{rescaling heat}
    \mu^\nu(x,t):=\frac{1}{\nu\sqrt{|\log \nu|}}\mu\left(\frac{x}{\sqrt{\nu}},t\right)\qquad \forall x\in \R^2,\, t\geq 0.
\end{equation}
This solves 
$$
\left\{\begin{array}{ll}
\partial_t \mu^\nu = \nu \Delta \mu^\nu \\
\mu^\nu(\cdot, 0)=\mu_0^\nu,
\end{array}\right.
$$
with 
$$
 \mu_0^\nu(x):=\frac{1}{\nu\sqrt{|\log \nu|}}\mu_0\left(\frac{x}{\sqrt{\nu}}\right)\qquad \forall x\in \R^2.
$$
The corresponding initial velocity is given by 
$$
v_0^\nu(x):= \frac{1}{\sqrt{\nu |\log \nu|}} v_0\left(\frac{x}{\sqrt{\nu}}\right)\qquad \forall x\in \R^2.
$$
Moreover
\begin{equation}
    \label{L2 loc}
    \sup_{\nu>0}\int_{B_2(0)} |v_0^\nu(x)|^2\,dx<\infty. 
\end{equation}
Note, however, that $v_0^\nu\not\in L^2(\R^2)$ since $v_0\not\in L^2(\R^2)$. We will thus cut it off at infinity. To do that, let $\chi \in C^\infty_c([0,2))$ be a nonnegative function, constantly equal to $1$ on $[0,1]$. Set $\tilde \chi (x):=\chi (|x|)$ for all $x\in \R^2$. Consequently, we define $u^\nu_0:=\tilde \chi v^\nu_0$.
Note that 
$$
\left(\div u^\nu_0\right)(x)=u^\nu_0(x)\cdot \tilde \chi (x) = u^\nu_0(x)\cdot \frac{x}{|x|} \chi'(|x|) = 0 \qquad \forall x\in \R^2.
$$
Moreover, by \eqref{L2 loc} we deduce that $\{u^\nu_0\}_\nu\subset L^2(\R^2)$ is bounded. Its $\curl$ is given by 
$$
\omega_0^\nu (x) =  \chi ' (|x|) \frac{x^\perp}{|x|}\cdot v^\nu_0(x) +\mu_0^\nu(x)\qquad \forall x\in \R^2.
$$
Let $\{\omega^\nu\}_\nu$ be the corresponding sequence of solutions to \eqref{NS-Vort}. By the radial symmetry of the initial data, these solve the heat equation. Therefore, denoting by 
$$
f^\nu_0(x):=\chi ' (|x|) \frac{x^\perp}{|x|}\cdot v^\nu_0(x),
$$
by linearity it must be $\omega^\nu = f^\nu +\mu^\nu$, where $f^\nu$ solves the heat equation with initial datum $f^\nu_0$ and $\mu^\nu$ is given by \eqref{rescaling heat}. From \eqref{L2 loc} we deduce 
\begin{equation}
    \label{bound for the L2 part}
    \|f^\nu(t)\|_{L^2}\leq \|f^\nu_0\|_{L^2}\lesssim 1\qquad \forall \nu,  t>0.
\end{equation}
Moreover 
\begin{equation}\label{L2 norm of rescaling}
\|\mu^\nu(t)\|^2_{L^2} = \frac{1}{\nu |\log \nu|}\|\mu(t)\|^2_{L^2}.
\end{equation}
By putting \eqref{bound for the L2 part} and \eqref{L2 norm of rescaling} together, we obtain 
\begin{align}
    \|\omega^\nu(t)\|^2_{L^2} &= \|f^\nu(t)\|^2_{L^2} + \|\mu^\nu(t)\|^2_{L^2} + 2\big(f^\nu(t), \mu^\nu(t)\big) \\
    &\geq \frac{3}{4} \|\mu^\nu(t)\|^2_{L^2} - 3 \|f^\nu(t)\|^2_{L^2} \\
    &\gtrsim \frac{1}{\nu |\log \nu|} -1, 
\end{align}
where to obtain the second last inequality we have used the Young's inequality $2ab\geq -4a^2 - \frac{b^2}{4}$ to bound the inner product from below. Also 
$$
\|\omega^\nu(t)\|^2_{L^2}\leq 2\left(\|f^\nu(t)\|^2_{L^2}+ \|\mu^\nu(t)\|^2_{L^2}\right)\lesssim 1+ \frac{1}{\nu |\log \nu|}
$$
for all $\nu >0$ and all $t\in (0,1)$. We have thus proved Proposition \ref{P:fake delort sharp}.

Let us summarize the above construction. We will ignore the ``absolutely continuous'' part $f^\nu$ since it only contributes with a lower order term\footnote{The reason why $f^\nu$ arose was only to cutoff  the velocity at infinity in order to make with finite kinetic energy, globally on $\R^2$.}. The main part $\mu^\nu$ has been obtained by rescaling the nonnegative function $\mu$, so that it concentrates at the origin, for all times $t\geq 0$.  Since point concentrations in the vorticity would not be consistent with a velocity having locally finite kinetic energy\footnote{See for instance the explicit formula \eqref{vort veloc radial formula} in the case of a radial vorticity.}, the additional term $|\log \nu|$ in the rescaling \eqref{rescaling heat} is necessary. Here by ``additional'' we mean with respect to the one that keeps the $L^1(\R^2)$ norm invariant. In addition, this also ensures that 
\begin{equation}
    \sup_{\substack{x \in \R^2  \\ \nu,t>0}} \int_{B_r(x)}|\mu^\nu(y,t)|\,dy = \sup_{\nu>0} \int_{B_r (0)} |\mu^\nu_0(y)|\,dy \sim \frac{1}{\sqrt{|\log r|}}\qquad \forall r\in \left(0,\frac12\right). 
\end{equation}
However, although the decay on balls is sharp, the presence of $|\log \nu|$ in \eqref{rescaling heat} gives 
$$
\|\mu^\nu_0\|_{L^1}= \frac{1}{\sqrt{|\log \nu|}}\|\mu_0\|_{L^1}\qquad \forall \nu>0.
$$
Therefore, not only the vorticity gives small mass to small balls, but it also vanishes strongly in $L^1(\R^2)$. 

The fact that all examples of this kind cannot give the conjecturally sharp rate can be seen with very little effort. Indeed, by applying the classical Nash inequality \eqref{classical nash} to
$$
\frac{d}{dt}\|\omega^{\nu}(t)\|_{L^2}^2 = -2 \nu \|\nabla \omega^{\nu}\|_{L^2}^2
$$
we obtain
$$
\frac{d}{dt}\|\omega^{\nu}(t)\|_{L^2}^2\lesssim -\frac{\nu}{\|\omega^\nu_0\|^2_{L^1}}  \|\omega^{\nu}(t)\|_{L^2}^4.
$$
By integrating the differential inequality, we deduce
$$
\|\omega^\nu(t)\|^2_{L^2}\lesssim \frac{\|\omega^\nu_0\|^2_{L^1}}{\nu t}\qquad \forall \nu, t>0, 
$$
from which 
\begin{equation}\label{general bound with L1 initial mass}
\nu \int_{\delta}^1 \|\omega^{\nu}(\tau)\|_{L^2}^2\,d\tau \lesssim |\log\delta| \|\omega^\nu_0\|^2_{L^1}\qquad \forall \nu,\delta>0.
\end{equation}
It follows that, whenever the total mass of the initial vorticities vanishes as $|\log \nu|^{-\sfrac12}$, the dissipation is bounded by $|\log \nu|^{-1}$, a square away from what we believe it should be the sharp one. 

It is also clear that, when the sequence vanishes in $L^1(\R^2)$, there is an additional small term in the proof of the improved Nash inequality, making \eqref{improved Nash from ELL} suboptimal.

\subsection{Construction with an integrable initial datum}\label{S:second ex} The approach described before is based on a specific variable rescaling. This  allows to built a sequence of solutions to \eqref{NS-Vort} having the desired decay on balls \eqref{attempt M log decay}, but whose total mass on the whole space vanishes, thus preventing the possibility to satisfy \eqref{attempt dissipation sharp}. This leads us to pursue a different strategy and, in particular, to search for radial solutions to the heat equation satisfying \eqref{attempt M log decay} with a fixed initial datum, so that its total mass is of order $1$. Since it is not necessary to consider singular measures to saturate \eqref{attempt M log decay}, we will work with an integrable function. To this aim, we introduce the function $\omega_0$ defined as
\begin{equation}\label{def of L1 vort}
\omega_0(x):=\frac{1}{|x|^2 |\log |x||^{\frac{3}{2}}}\mathbbm{1}_{B_{\frac{1}{2}}(0)}(x)\qquad \forall x\in \R^2.
\end{equation}
We then solve, for any $\nu>0$, the following Cauchy problem \begin{equation}\label{heat equation 3} 
\left\{\begin{array}{ll}
\partial_t \omega^{\nu} = \nu \Delta \omega^{\nu} \\
\omega^{\nu}(\cdot, 0)=\omega_0
\end{array}\right.\end{equation}
on $\R^2\times [0,\infty)$. By the radial symmetry of the initial datum, $\{\omega^\nu\}_\nu$ solves \eqref{NS-Vort}. Notice that, in contrast to the attempt presented before, here the initial datum does not depend on $\nu$. Moreover 
\begin{equation}\label{L1 funct with log decay}
       \sup_{x\in \R^2} \int_{B_r(x)}|\omega_0(y)|\,dy = \int_{B_r(0)}|\omega_0(y)|\,dy  \sim \frac{1}{\sqrt{|\log r|}}\qquad \forall r\in \left(0,\frac12\right).
    \end{equation}
    Such property can be easily seen to be propagated in time by \eqref{heat equation 3}, proving \eqref{attempt M log decay}.
        
        The initial velocity $u_0$ corresponding to $\omega_0$ borderline fails to have finite kinetic energy in the sense specified below. Due to \eqref{L1 funct with log decay}, for small values of $|x|$ it holds $$u_0(x)=\frac{x^\perp}{|x|^2}\int_0^{|x|}r\omega_0(r)\,dr =\frac{x^\perp}{|x|^2}\omega_0(B_{|x|}(0))\sim \frac{x^\perp}{|x|^2 \sqrt{|\log|x||}}.$$
       Therefore $u_0\not \in L^2(B_R(0))$ for any $R>0$. However, if we would have increased the power of the logarithm in \eqref{def of L1 vort} by an $\eps>0$, the corresponding initial velocity would be in $L^2_{\text{\rm loc}}(\R^2)$. Then, as already done in \cref{S:first ex}, a further multiplication with a smooth compactly supported radial function would make it with finite kinetic energy globally on $\R^2$. Since these little modifications would not add any substantial difference\footnote{Adding an $\eps>0$ to the power of the logarithm in \eqref{def of L1 vort} would in fact provide an even faster rate of the dissipation. Cutting off the velocity at infinity only adds a lower order term, which does not modify the rate.}, we will stick to \eqref{def of L1 vort} to lighten the calculations.
       
       We want to estimate the energy dissipation term$$\nu \int_{\delta}^1 \|\omega^{\nu}(\tau)\|^2_{L^2}\,d\tau$$
       for some $\delta\in (0,1)$. We will see that the rate is again $\lesssim |\log\nu|^{-1}$, thus failing to match \eqref{attempt dissipation sharp}. We begin by providing an estimate for $\|\omega^{\nu}(t)\|_{L^2}^2$ for all $t>0$. We pass to Fourier variables in \eqref{heat equation 3} to get $$\hat{\omega}^{\nu}(\xi,t)=\hat{\omega}_0(\xi)e^{-\nu t|\xi|^2}.$$ We  want to derive an upper bound for the Fourier transform of the initial datum  $$\hat{\omega}_0(\xi)=\int_{B_{\frac{1}{2}}(0)}\frac{1}{|x|^2|\log|x||^{\frac{3}{2}}}e^{-ix\cdot \xi}\,dx \qquad \forall \xi\in \R^2.$$
       By the coarea formula we have $$\hat{\omega}_0(\xi)=\int_0^{\frac{1}{2}}\frac{1}{r|\log r|^{\frac{3}{2}}}S(r\xi)\,dr\qquad \text{with}\qquad   S(r\xi):=\int_{\mathbb{S}^1}e^{-iy\cdot \xi r}\,d\mathcal{H}^1(y).$$Recalling the computations carried out in the proof of Proposition \ref{P:saturate algebraic rate} from \cref{S:enstroph bounds}, we know that 
    $$
    |S(r\xi)| \lesssim \frac{1}{\sqrt{r|\xi|}} \qquad \text{if } r|\xi|>1.
    $$
On the other hand, for $r |\xi|\leq 1$, the bound $|S(r\xi)| \leq 2\pi$ is trivial. Consequently, by splitting the integral as $$\hat{\omega}_0(\xi)=\int_0^{\frac{1}{|\xi|}}\frac{1}{r|\log r|^{\frac{3}{2}}}S(r\xi)\,dr+\int_{\frac{1}{|\xi|}}^{\frac{1}{2}}\frac{1}{r|\log r|^{\frac{3}{2}}}S(r\xi)\,dr \qquad \forall |\xi|>4,$$ 
we obtain that
\begin{align}
|\hat{\omega}_0(\xi)| &\lesssim \int_0^{\frac{1}{|\xi|}}\frac{1}{r |\log r|^{\frac{3}{2}}}\,dr+\frac{1}{\sqrt{|\xi|}}\int_{\frac{1}{|\xi|}}^{\frac{1}{2}}\frac{1}{r^{\frac{3}{2}}|\log r|^{\frac{3}{2}}}\,dr  \\
&\lesssim \frac{1}{\sqrt{|\log |\xi||}}+\frac{1}{\sqrt{|\xi|}}\frac{\sqrt{|\xi|}}{|\log |\xi||^\frac32}\int_{\frac{1}{|\xi|}}^{\frac{1}{2}}\frac{1}{r}\,dr \\
&\lesssim \frac{1}{\sqrt{|\log |\xi||}}  
\end{align}
for all $|\xi|>4$. Moreover
$$
|\hat \omega_0(\xi)|\leq \|\omega_0\|_{L^1}\qquad \forall \xi\in \R^2.
$$
To summarize, we have 
$$
|\hat \omega_0(\xi)|\lesssim \begin{cases}
1 & \text{if }\; |\xi|\leq 4\\[4pt]
\frac{1}{\sqrt{\log |\xi|}} & \text{if }\; |\xi|>4.
\end{cases}
$$
By Plancherel, we then deduce 
\begin{align}
    \|\omega^\nu (t)\|^2_{L^2} &=  \|\hat \omega^\nu (t)\|^2_{L^2} \\
    & =\int_{B_4(0)} |\hat \omega^\nu (\xi,t)|^2\,d\xi +  \int_{B^c_4(0)} |\hat \omega^\nu (\xi,t)|^2\,d\xi \\
    &\lesssim \int_{B_4(0)} e^{-2\nu t|\xi|^2}\,d\xi + \int_{B^c_4(0)} \frac{e^{-2\nu t|\xi|^2}}{\log |\xi|}\,d\xi.\label{split in B4 and complement}
\end{align}
The first term enjoys the trivial estimate
\begin{equation}
    \label{trivial estimate for first}
    \int_{B_4(0)} e^{-2\nu t|\xi|^2}\,d\xi\lesssim 1\qquad \forall \nu,t>0.
\end{equation}
The second one requires some extra care. By integrating it in polar coordinates and changing variables, we bound it as 
\begin{equation}
    \label{bound in polar coord}
     \int_{B^c_4(0)} \frac{e^{-2\nu t|\xi|^2}}{\log |\xi|}\,d\xi\leq \int_4^\infty \frac{r e^{-\nu tr^2}}{\log r}\,dr  = \frac{1}{\nu t |\log (\nu t)|} \int_{4\sqrt{\nu t}}^\infty \frac{r e^{-r^2}}{\frac12  - \frac{\log r}{\log (\nu t)} }\,dr.
\end{equation}
Therefore, by using \eqref{trivial estimate for first} and \eqref{bound in polar coord} into \eqref{split in B4 and complement} we obtain 
\begin{equation}
    \label{almost last bound}
    \|\omega^\nu (t)\|^2_{L^2} \lesssim 1+\frac{1}{\nu t |\log (\nu t)|} \int_{4\sqrt{\nu t}}^\infty \frac{r e^{-r^2}}{\frac12  - \frac{\log r}{\log (\nu t)} }\,dr.
\end{equation}
We now claim that 
\begin{equation}
    \label{claim integral bounded}
    \sup_{\eps\in (0,1)}\int_{4\sqrt{\eps}}^\infty \frac{r e^{-r^2}}{\frac12  - \frac{\log r}{\log \eps} }\,dr<\infty.
\end{equation}
Note that, given the claim, from \eqref{almost last bound} we achieve 
$$
 \|\omega^\nu (t)\|^2_{L^2} \lesssim \frac{1}{\nu t |\log (\nu t)|} \qquad \text{if } \nu t<1,
$$
from which, for any $\delta\in (0,1)$, we deduce\footnote{Recall that $\log s\sim s -1 $ as $s\rightarrow 1$.}
\begin{align}
\nu \int_{\delta}^1 \|\omega^{\nu}(\tau)\|_{L^2}^2\,d\tau &\lesssim - \log \bigg(\frac{\log \frac{1}{\nu}}{\log \frac{1}{\nu}+ \log \frac{1}{\delta}}\bigg) \\
&\sim \frac{\log \frac{1}{\delta}}{\log \frac{1}{\nu}+\log \frac{1}{\delta}} \\
&\leq \frac{|\log \delta |}{|\log \nu|} 
\end{align}
if $\nu$ is sufficiently small, depending also on $\delta$. It remains to prove the claim.  Let us define 
$$
f_{\eps}(r):= \frac{re^{-r^2}}{\frac{1}{2}-\frac{\log r}{\log \eps}}\mathbbm{1}_{[4 \sqrt{\eps}, \infty)}(r)\qquad \forall r>0.
$$
We treat separately the case $r>4$ and $r\in [4\sqrt{\eps}, 4]$. For $r>4$ we have $f_\eps (r)\leq 2 r e^{-r^2}$, from which 
$$
\sup_{\eps\in (0,1)}\int_4^\infty f_\eps (r)\,dr \leq 2\int_4^\infty r e^{-r^2}\,dr<\infty.
$$
For $r\in [4\sqrt{\eps},4]$ we instead use that 
$$
\int_{4\sqrt{\eps}}^4 f_\eps(r)\,dr \leq \int_{4\sqrt{\eps}}^4 \frac{r}{\frac{1}{2}-\frac{\log r}{\log \eps}}\,dr.
$$
The derivative of the function inside the latter integral is given by 
$$
\frac{\log \eps - 2\log r +2 }{2\log \eps \left(\frac{1}{2}-\frac{\log r}{\log \eps}\right)^2},
$$
which is positive if and only if $\log \eps - 2\log r +2<0$, that is, if and only if $r>e \sqrt{\eps}$. It follows that the function is increasing in the interval $[4\sqrt{\eps},4]$, with the maximum achieved at the endpoint $r=4$. Therefore
$$
\sup_{\eps \in (0,1)}\int_{4\sqrt{\eps}}^4 f_\eps(r)\,dr \leq 16.
$$
This concludes the proof of the claim \eqref{claim integral bounded}.

To summarize, even if the initial datum \eqref{def of L1 vort} has a fixed total mass and saturates the logarithmic decay on balls, the corresponding solutions to \eqref{NS-Vort} enjoy, for any $\delta\in (0,1)$, the estimate
$$
\nu \int_{\delta}^1 \|\omega^{\nu}(\tau)\|_{L^2}^2\,d\tau\lesssim \frac{1}{|\log \nu|}\qquad \forall 0<\nu\ll 1,
$$
failing again to satisfy \eqref{attempt dissipation sharp} by a square. 

\subsection{Conclusions}\label{S:conclusions} We have tried to achieve \eqref{attempt dissipation sharp} with two examples. The first, given in \cref{S:first ex}, was obtained by appropriately rescaling a smooth and compactly supported vorticity profile, so that it concentrates at the origin. In order for the vorticity to have the desired logarithmic decay on balls \eqref{attempt M log decay}, and also for the velocity to have finite kinetic energy, the specific choice \eqref{rescaling heat} for the rescaling is necessary.  However, this makes the total mass of the initial data vanishing like $\lesssim |\log \nu|^{-\sfrac12}$, preventing the possibility to achieve \eqref{attempt dissipation sharp} because of the general estimate \eqref{general bound with L1 initial mass}.

This motivated us to study, in \cref{S:second ex}, the case with a fixed initial datum. We have thus tried with the integrable function \eqref{def of L1 vort}, since it has the sharp decay on balls. However, even in this case, the dissipation for positive times turned out to be $\lesssim |\log \nu|^{-1}$.
This leads us to believe that, in the case it would really be possible to achieve \eqref{attempt dissipation sharp},  giving therefore a positive answer to Conjecture \ref{Conj}, the vorticity should concentrate on a ``sparser'' set and not only to a single point, nor on a too ``organized'' one. This is indeed how the example from Proposition  \ref{P:nash ineq sharp} saturating the Nash inequality looks like. If this intuition is correct, it must be then necessary to give up on the possibility to saturate the rate of the dissipation for \eqref{NS-Vort} with the heat equation because of the non-local interaction between the different pieces of the concentration set of the vorticity that would make the nonlinear term $u^\nu\cdot \nabla \omega^\nu$ nontrivial\footnote{Note that, as shown in Proposition \ref{P:saturate algebraic rate}, this is not the case for measure vorticities with the linear decay \eqref{1d decay on balls} on balls, where the rate can be saturated with radial solutions to the heat equation.}. Having a reasonable control over the nonlinear dynamics of such singular vorticity measures seems a quite challenging problem.

\bibliographystyle{plain} 
\bibliography{biblio}

\end{document}